\numberwithin{equation}{section}
\newtheorem{theorem}{Theorem}
\theoremstyle{remark}
\newtheorem{remark}{Remark}
\theoremstyle{definition}
\newtheorem{algorithm}{Algorithm}
\DeclareMathOperator{\diag}{diag}
\DeclareMathOperator{\proj}{proj}
\newcommand{\norm}[1]{\left\Vert#1\right\Vert}
\newcommand{\bigo}{{\mathcal O}}
\newcommand{\bigop}[1]{{\mathcal O}\left({#1}\right)}
\newcommand{\f}{\frac}
\newcommand{\R}{\mathbb{R}}
\newcommand{\anorm}{\norm{\,\cdot \,}}
\newcommand{\eps}{\varepsilon}
\newcommand{\nr}[1]{\ensuremath{\left\|{#1}\right\|}}
\newcommand{\nrs}[1]{\ensuremath{\|{#1}\|}} 
\begin{document}
\title[Eigenvalue extrapolation]{A simple extrapolation method for clustered eigenvalues}
\author[N. Nigam]{Nilima Nigam}
\email{nigam@math.sfu.ca}

\author[S. Pollock]{Sara Pollock}
\email{s.pollock@ufl.edu}

\address{Department of Mathematics, Simon Fraser University, Burnaby, BC, CA}
\address{Department of Mathematics, University of Florida, Gainesville, FL, US} 

\thanks{SP is supported in part by the National Science Foundation NSF-DMS 1852876.
         NN is supported through the Discovery Grants program of the Natural Sciences and          Engineering Research Council of Canada.}
\date{\today}
\keywords{
  eigenvalue computation, extrapolation, power method, spectral gap
}
\subjclass[2010]{
  65B05, 
  65F15 
}
\begin{abstract}
This paper introduces a simple variant of the power method.
It is shown analytically and numerically to accelerate convergence to the dominant 
eigenvalue/eigenvector pair; and, it is particularly effective for problems featuring
a small spectral gap.
The introduced method is a one-step extrapolation technique that uses a linear combination
of current and previous update steps to form a better approximation of the
dominant eigenvector. The provided analysis shows the method converges 
exponentially with respect to the ratio between the two largest eigenvalues,
which is also approximated during the process.  An augmented technique is 
also introduced, and is shown to stabilize the early stages of the iteration.
Numerical examples are provided to illustrate the theory and demonstrate the methods.
\end{abstract}
\maketitle

\section{Introduction}\label{sec:intro}
The power method is a standard tool for capturing the dominant eigenvalue /eigenvector
pair of a matrix.  Its advantages include simplicity and ease of implementation.
Since it relies only on repeated matrix-vector products, the method can be run without
explicit formation of the underlying system matrix.  Applications for recovering 
dominant eigenpairs include stability analysis of PDE systems \cite{BaOs91}, 
and principal component analysis (PCA), where often only the dominant
eigencomponents are of interest \cite{DSHMRX19}.  However, the convergence of the 
method is proportional to the rate $r = |\lambda_2/\lambda_1|$, or $r^2$ in the
Hermitian case \cite[Chapter 8]{GoVa13}, where the eigenvalues are ordered by
descending magnitude.

The purpose of this paper is to introduce an accelerated version of the power method
based on a one-step extrapolation. This method is shown to demonstrate asymptotically 
exponential convergence with respect to the ratio $r$. 
It will be shown to improve convergence to the dominant eigenpair for 
positive semi-definite systems, where the ratio $r$ is close to unity.  
More generally, it will be shown in the numerical results to be effective on 
{\em dominantly} positive definite 
systems, where the eigenvalues of magnitude close to $\lambda_1$ are positive, but
smaller eigenvalues may be of either sign.

Each eigenvector iterate $u_{k+1}$ is
formed by a linear combination of $v_{k+1}$ and $v_k$, where 
$v_{k+1} = A u_{k+1}/\nr{u_{k+1}}$.
The most basic form of the iteration is 
$u_{k+1} = (1-\gamma_{k})v_{k+1} + \gamma_k v_k$, where 
$\gamma_k = -\nr{d_{k}}/\nr{d_{k-1}}$, 
the residual $d_k$ is given by
$u_k - \lambda_{k-1} x_{k-1}^\gamma$, $\lambda_{k-1}$ 
is the Rayleigh quotient, and $x_{k-1}^\gamma$ is the $A$-preimage of the accelerated
$u_k$.  The method requires the storage of two additional vectors, and does not 
require significant additional computations other than a Rayleigh quotient and residual 
norm at each iteration. The method is motivated by the idea that if the initial iterate
$u_0$ is a linear combination of the first two eigenvectors, then the method 
converges exponentially by choice of an extrapolation parameter that approximates
$-r^k$ at iteration $k$.  This motivation will be given in detail in section 
\ref{sec:ideal},
and an analysis of the extrapolation method will be given as the main result
in section \ref{sec:simple}.

Extrapolation methods to accelerate the power iteration have been of recurring 
interest over the past several decades.
The well-known Aitken's acceleration is discussed in the context
of the power iteration in \cite[Chapter 9]{wilkinson65}, and more recently in 
\cite[Chapter 5]{kamvar10}, the latter as applied to Markov matrices for 
PageRank computations. More recent extrapolation techniques in \cite{BWM20,DSHMRX19}
reduce the algorithmic complexity sometimes associated with implementing 
Aitken's process (which should not necessarily be applied at every step), 
and succeed in defining stable accelerations which improve the 
convergence rate of the power iteration. 

The approach introduced in \cite{BWM20} defines a parametrized 
power method based on viewing the power iteration as a nonstationary Richardson 
or gradient descent method.
It is defined by 
$y_{k+1} = (1-\omega_k \mu_k)y_k/\nr{y_k} + \omega_k Ay_k/\nr{y_k}$, where $\mu_k$
is a Rayleigh quotient and $\omega_k$ is a parameter. The improvement in the convergence
for a given problem using this method
is dependent on appropriate choice of the parameter $\omega_k$.
Another extrapolation approach is presented in \cite{DSHMRX19} in the context of 
a stochastic iteration for PCA. 
That method, first introduced in a deterministic
setting, takes the form $u_{k+1} = Au_k - \beta u_{k-1}$. The product $\beta u_{k-1}$ is 
called a momentum term, with $\beta$ the momentum parameter, due to its motivation 
by the heavy ball method of \cite{polyak64}.
The ideal value of $\beta$ is given as
$\lambda_2^2/4$, and given this information, this method is shown
to accelerate convergence of the power iteration for semi-definite problems, and
particularly those featuring a small spectral gap.
The method is sensitive to the value of $\beta$, however, and as
$\lambda_2$ is not generally available, it must be approximated  
to effectively run the method.
A heuristic approach that performs an initial sequence of iterations to generate 
such an approximation is suggested in \cite{DSHMRX19},
but it is not guaranteed to produce a sufficiently good approximation of $\lambda_2$.
The proposed method of the present paper differs, as it generates a 
convergent sequence of approximations to $\lambda_1$ and $r = \lambda_2/\lambda_1$, 
as the method progresses. Its implementation does not depend on the knowledge of any
{\em a priori} unknown quantities.

This form of acceleration may not necessarily compete with sophisticated preconditioned
subspace iteration methods such as \cite{DSYG18,GoYe02,knyazev01}, 
particularly in finding more than one extremal eigenpair of a normal matrix.  
However, the ease of implementation, even with existing codes that
use a power iteration, make extrapolation methods attractive as they can succeed in 
substantially reducing computational time with only a few additional vector operations.
For finding a single dominant eigenpair of a symmetric matrix, 
a numerical comparison between the 
present methods and the inverse free preconditioned Krylov subspace methods
of \cite{GoYe02,QuYe10} is included in subsection \ref{subsec:ifpks}, 
and indicates these methods can have similar runtimes. 
In subsection \ref{subsec:ex1}, the presented extrapolated methods are also 
demonstrated to accelerate the power iteration on nonsymmetric systems.

The remainder of the paper is organized as follows. In subsection \ref{subsec:alg}, 
the introduced 
simple and augmented extrapolation methods are stated. 
In section \ref{sec:ideal}, the simple method, alg. \ref{alg:simple}, 
is motivated by an idealized
extrapolation algorithm that makes use of information such as the value of $r$ that
in general is {\em a priori} unavailable.
The main result, theorem \ref{thm:simple} is presented in section \ref{sec:simple}, 
and shows that
the idealized method of section \ref{sec:ideal} is well-approximated by the practical
alg. \ref{alg:simple}. 
Section \ref{sec:augment} includes a discussion of the augmented method, 
alg. \ref{alg:simp-aug}, and numerical results are given in section 
\ref{sec:numerics}.

\subsection{Extrapolation algorithms}\label{subsec:alg}
Define the inner product for $u, v \in \R^n$, by $(u,v) = u^Tv$, and let
$\anorm$ be the induced $l_2$ norm. 
First, the power method is stated for 
notational convenience.

\begin{algorithm}{Power method}
\label{alg:power}
\begin{algorithmic}
\STATE{Choose $u_0$, set $h_0 = \nr{u_0}$ and $x_0 = h_0^{-1}u_0$.}
\FOR{$k = 0, 1, \ldots$}
\STATE{Set $u_{k+1} = A x_k$, and $h_{k+1} = \nr{u_{k+1}}$}
\STATE{Set $x_{k+1} = h_{k+1}^{-1} u_{k+1}$, $\lambda_k = (u_{k+1}, x_k)$, 
       and $d_{k+1} = {u_{k+1} - \lambda_k x_k}$}
\STATE{{STOP} if $\nr{d_{k+1}} <$ \tt{tol}  }
\ENDFOR
\end{algorithmic}
\end{algorithm}

The first novel method introduced requires two power iterations to start, 
as two consecutive
residuals are required to define each extrapolation parameter $\gamma_k$.  In the
analysis that follows, $m$ iterations of alg. \ref{alg:power} will be used before the
simple extrapolation method below is started.
\begin{algorithm}{Simple extrapolation}
\label{alg:simple}
\begin{algorithmic}
\STATE{Choose $u_0$ and set $h_0 = \nr{u_0}$}
\FOR{$k = 0,1$}
\STATE{Set $x_k = h_k^{-1}u_k$, $u_{k+1} = v_{k+1} = A x_k$, $h_{k+1} = \nr{u_{k+1}}$}
\STATE{Set $\lambda_k = (u_{k+1},x_k)$, and $d_{k+1} = {u_{k+1} - \lambda_k x_k}$}
\ENDFOR

\FOR{$k = 2, 3, \ldots$}
\STATE{$\displaystyle \begin{aligned}
\text{Set }~  &x_{k} = h_k^{-1} u_k \\
  &v_{k+1} = A x_k 
\end{aligned}$}
\STATE{Compute $\gamma_k = -\nr{d_{k}}/\nr{d_{k-1}}$  }
\STATE{$\displaystyle \begin{aligned} 
\text{Set }~ 
  &u_{k+1} = (1-\gamma_k) v_{k+1} + \gamma_{k}v_k,  \quad h_{k+1} = \nr{u_{k+1}} \\
  &x_{k}^\gamma = (1-\gamma_k) x_{k} + \gamma_{k}x_{k-1} \\
  &\lambda_k = (u_{k+1},x_{k}^\gamma)/(x_k^\gamma,x_k^\gamma) \\
  &d_{k+1} = {u_{k+1} - \lambda_k x_k^\gamma} 
\end{aligned}$} 
\STATE{{STOP} if $\nr{d_{k+1}} <$ \tt{tol}  }
\ENDFOR
\end{algorithmic}
\end{algorithm}
The next method, alg. \ref{alg:simp-aug}, 
is designed to start after two (rather than $m$) power iterations,
by augmenting the calculation of the extrapolation parameter with additional 
eigenvalue-specific information. 
Algorithm \ref{alg:simp-aug} is a one-parameter
family of methods where larger values of the parameter $\eta$ reduce the influence of 
the extrapolation.
In the remainder, alg. \ref{alg:simple} will be referred to as the simple method, and
alg. \ref{alg:simp-aug} will be called the augmented method.
\begin{algorithm}{Augmented extrapolation}
\label{alg:simp-aug}
\begin{algorithmic}
\STATE{Choose $u_0$ and parameter $\eta\ge1$,  and set $h_0 = \nr{u_0}$}
\FOR{$k = 0,1$}
\STATE{Set $x_k = h_k^{-1}u_k$, $u_{k+1} = v_{k+1} = A x_k$, $h_{k+1} = \nr{u_{k+1}}$}
\STATE{Set $\lambda_k = (u_{k+1},x_k)$, $d_{k+1} = {u_{k+1} - \lambda_k x_k}$}
\ENDFOR
\STATE{Set $p_1 = (v_2 - u_1,x_1)$}

\FOR{$k = 2, 3, \ldots$}
\STATE{$\displaystyle \begin{aligned}
\text{Set }~  &x_{k} = h_k^{-1} u_k \\
  &v_{k+1} = A x_k \\
  &p_k = (v_{k+1}-u_k,x_k) 
\end{aligned}$}
\STATE{Compute $\gamma_k = -\big(\nr{d_k}^2 + p_k^2\big)^{1/2}/ 
 \big(\nr{d_{k-1}}^2 + (\eta p_{k-1})^2\big)^{1/2}$}
\STATE{$\displaystyle \begin{aligned} 
\text{Set }~ 
   &u_{k+1} = (1-\gamma_k) v_{k+1} + \gamma_{k}v_k,  \quad h_{k+1} = \nr{u_{k+1}} &\\
   &x_{k}^\gamma = (1-\gamma_k) x_{k} + \gamma_{k}x_{k-1} \\
   &\lambda_k = (u_{k+1},x_{k}^\gamma)/(x_k^\gamma,x_k^\gamma) \\
   &d_{k+1} = {u_{k+1} - \lambda_k x_k^\gamma} 
\end{aligned}$} 
\STATE{{STOP} if $\nr{d_{k+1}} <$ \tt{tol}  }
\ENDFOR
\end{algorithmic}
\end{algorithm}

\begin{remark}[On alg. \ref{alg:simp-aug}]\label{rem:eta}
The $\lambda_k$ are all true Rayleigh quotients for the accelerated iterates.  
The scalar quantities $p_k$ are defined by $p_k = (v_{k+1}- u_k, x_k)$. 
They are denoted by $p$ because each is the projection of the difference between
the latest extrapolated iterate and its image under $A$, along the normalized
iterate $x_k$.  Another view is
\[
p_k = (v_{k+1} - u_k,x_k) = (Ax_k - h_k x_k,x_k) = (Ax_k,x_k) - h_k,
\]
the difference between the pre-extrapolation Rayleigh quotient $(Ax_k, x_k)$,
and the norm of the previous accelerated iterate, both approximations of the
dominant eigenvalue.

Some intuition on the parameter $\eta$ in alg. \ref{alg:simp-aug}, can be gained
by considering the limiting cases.  If $\eta$ is sufficiently large, 
then $\gamma_k$ tends toward zero, by which alg. \ref{alg:simp-aug} reduces to 
the alg. \ref{alg:power}.  This helps to explain why alg. \ref{alg:simp-aug} effectively
replaces running some number $m$ iterations of the power iteration before applying
the simple extrapolation, alg. \ref{alg:simple}, as done in the analysis below.  
A moderately chosen parameter $\eta$ reduces the influence of the 
extrapolation at the beginning of the iteration. The influence of $\eta$ then decreases
as $p_k$ is dominated by $\nr{d_k}$, as the first eigencomponent is resolved.
Further detail is given in section \ref{sec:augment}. 
\end{remark}

The analysis of the simple extrapolation method of alg. \ref{alg:simple} which is
presented below in section \ref{sec:simple} is complicated by the $l_2$ normalization 
factors $h_j$ and the inexactness of $\gamma_j$ as an approximation to $-r^j$, where
$r = \mu/\lambda < 1$ is the ratio of the first two eigenvalues, assumed to be positive.
To motivate how the method works without these complications, an idealized analysis
is next presented. The aforementioned complications are removed by normalizing 
by the principal eigenvalue $\lambda$ rather than the $l_2$ norm of iterate $u_j$,
and by supposing $\gamma_j$ is $-r^j$. This is not possible for most practical 
purposes, where $\lambda$ and $r$ are {\em a priori} unknown.
The notable exception is in PageRank algorithms, for which the (dominant) 
positive-definiteness is generally not satisfied, and for which appropriate accelerations
have been well-developed elsewhere, for instance in 
\cite{GoGr06,HKMG03,HWHSG21,KHMG03} and \cite[Chapter 5]{kamvar10}, 
and the references therein.
However, supposing this information were available, a classical extrapolation 
viewpoint illustrates how exponential convergence can be achieved starting from a linear 
combination of the first two eigenvectors. In section \ref{sec:augment} and the 
numerical results of section \ref{sec:numerics}, it will be illustrated that the starting 
vector need not be restricted to the first two eigencomponents.

\section{Motivation: idealized extrapolation}\label{sec:ideal}
Suppose $m$ iterations of the power method have been run on $v_0 = \varphi + c \psi$
with normalization $v_k = \lambda^{-1}Av_{k-1}$, where $\lambda$ and $\mu$ are the 
first two leading eigenvalues with corresponding orthonormal
eigenvectors $\varphi$ and $\psi$.
Let $r = \mu/\lambda$. Then
\begin{align*}
v_m  = \varphi + c r^{m} \psi, \quad
v_{m+1}  = \varphi + c r^{m+1} \psi,
\quad
v_{m+2}  = \varphi + c r^{m+2} \psi.
\end{align*}

Now let's choose $\gamma_{m+1}$ 
to form $u_{m+2} = (1-\gamma_{m+1}) v_{m+2} + \gamma_{m+1} v_{m+1}$ to 
get a higher-order deterioration on the second eigencomponent
\begin{align}\label{eqn:extrap-001}
u_{m+2} &= (1-\gamma_{m+1})v_{m+2} + \gamma_{m+1} v_{m+1}
 = \varphi + ((1-\gamma_{m+1})r + \gamma_{m+1}) cr^{m+1} \psi.
\end{align}
From \eqref{eqn:extrap-001}, the choice $\gamma_{m+1} = -r$ allows
$u_{m+2} = \varphi + c r^{m+3} \psi$,
which improves on $v_{m+2}$ by a factor of $r$, and yields
$v_{m+3} = \varphi + c r^{m+4} \psi$.
Notice that we could have chosen $\gamma_{m+1} = -r/(1-r)$, to eliminate the entire
second eigencomponent. This, however, leads to a highly unstable method, 
particularly if $r$ is close to one, which is the case we are most interested in. 
Repeat the process for $\gamma_{m+2}$
\begin{align*}
u_{m+3} &= (1-\gamma_{m+2})v_{m+3} + \gamma_{m+2} v_{m+2}
 = \varphi + ((1-\gamma_{m+2})r^2 + \gamma_{m+2}) c r^{m+2}\psi. 
\end{align*}
The choice of $\gamma_{m+2}= -r^2$ allows
$u_{m+3} = \varphi + c r^{m+6} \psi$, and
$v_{m+4}  = \varphi + c r^{m+7} \psi$. 

Continuing inductively, suppose
\begin{align}\label{eqn:pextrap007}
v_{m+k}  = \varphi + c (r^{m + 1 + \sum_{l=1}^{k-1}l})\psi, \quad 
v_{m+k+1} = \varphi + c (r^{m + 1 + \sum_{l=1}^{k}l})\psi. 
\end{align}
Then for $\gamma_k = -r^k$ we have
\begin{align}\label{eqn:extrap007}
u_{m+k+1} 
& = (1-\gamma_{m+k}) v_{m+k+1} + \gamma_{m+k} v_{m+k}
\nonumber \\ 
&= \varphi 
+ \left( (1-\gamma_{m+k})r^k + \gamma_{m+k} \right)
(r^{m+1+\sum_{l=1}^{k-1}l})c\psi
\nonumber \\
& = \varphi + c(r^{m + \sum_{l = 1}^{k+1}l}) \psi, 
\nonumber \\
v_{m+k+2} & = \varphi + c(r^{m + 1+ \sum_{l = 1}^{k+1}l}) \psi. 
\end{align}
which completes the induction.  

Next, we demonstrate that this idea works in a practical sense, by 
alg. \ref{alg:simple}, where the extrapolation parameter $\gamma_{m+k}$ is defined by 
$-\nr{d_{m+k}}/\nr{d_{m+k-1}}$.
As shown in the next section and numerically in section \ref{sec:numerics}, 
this approximates $-r^k$ up to $\bigop{r^{2(m-1)+k}}$, which is  
sufficient to obtain the exponential convergence seen above.

\begin{remark}
The choice of $\gamma_{m+k} =-\nr{d_{m+k}}/\nr{d_{m+k-1}}$ is not the only possibility
to give this approximation to $-r^k$.  One could also choose
$\hat \gamma_{m+k} = -\nr{w_{m+k}}/\nr{w_{m+k-1}}$, where $w_j = v_{j+1} - u_j$. 
Numerically, the two methods appear essentially equivalent, but the analysis 
was found to be simpler defining the extrapolation parameter in terms of the residuals.
\end{remark}

\section{Analysis of simple extrapolation}\label{sec:simple}
Suppose the power iteration, alg. \ref{alg:power}, 
is run for some given number of iterations $m$, 
before switching to the  simple extrapolation, alg. \ref{alg:simple}.
As in section \ref{sec:ideal}, we will
suppose the initial iterate $u_0$ is some linear combination  of the first two 
(dominant) normalized eigenvectors: 
$\varphi$ with eigenvalue $\lambda$, and $\psi$ with eigenvalue $\mu$,
with $(\varphi,\psi) = 0$.
Let $r = \mu/\lambda$. 
In the analysis that follows, we will suppose $r > 1/2$.
Supposing $u_0 = \varphi + c \psi$, for some $c \in \R$, after $n \ge 1$ power iterations
\begin{align}\label{eqn:simp-001}
x_{n-1} &= (1 + c^2r^{2(n-1)})^{-1/2}(\varphi + c r^{n-1} \psi) 
         = \delta_{n-1}^{-1} (\varphi + c r^{n-1} \psi),
\nonumber \\
u_n = v_n & = \lambda (1 + c^2r^{2(n-1)})^{-1/2} (\varphi + c r^n \psi)
            = \lambda \delta_{n-1}^{-1} (\varphi + c r^n \psi),
\nonumber \\
\lambda_{n-1} &= (u_{n},x_{n-1}) = \lambda \f{1+c^2 r^{2n-1}}{1+c^2 r^{2(n-1)}}
 = \lambda \delta_{n-1}^{-2}(1+c^2 r^{2n-1}),
\end{align}
where 
$\delta_n \coloneqq (1 + c^2 r^{2n})^{1/2}$. 
The residual $d_n = u_n - \lambda_{n-1} x_{n-1}$ 
is then given by
\begin{align}\label{eqn:simp-002}
d_n &= \lambda \delta_{n-1}^{-1}
\left\{ (\varphi + c r^n \psi) - \f{1 + c^2 r^{2n-1}}{1 + c^2 r^{2(n-1)}}
(\varphi + cr^{n-1} \psi) \right\}
\nonumber \\
 &= \lambda \delta_{n-1}^{-1}\left\{
\varphi \left( 1 - \f{1 + c^2 r^{2n-1}}{1 + c^2 r^{2(n-1)}}  \right) 
+ cr^{n-1}\psi \left(r - \f{1 + c^2 r^{2n-1}}{1 + c^2 r^{2(n-1)}}
 \right) \right\}
\nonumber \\
 &= \lambda(1-r) cr^{n-1} \delta_{n-1}^{-3}\left\{
c r^{n-1} \varphi - \psi \right\}.
\end{align}
Similarly, 
$d_{n+1} = \lambda (1-r) cr^n \delta_n^{-3}\left\{
cr^n \varphi - \psi \right\}.$
Taking the ratio of $\nr{d_{n+1}}$ and $\nr{d_n}$ for $n = m$ 
to perform the first extrapolation, we have
\begin{align}\label{eqn:simp-002a}
\nr{d_m} &= \f{\lambda(1-r) cr^{m-1}}{\delta_{m-1}^2}=
\f{\lambda(1-r) cr^{m-1}}{1 + c^2 r^{2(m-1)}}, 
\nonumber \\
\nr{d_{m+1}} &= \f{\lambda(1-r) cr^{m}}{\delta_m^2} =  
\f{\lambda(1-r) cr^{m}}{1 + c^2 r^{2m}}, 
\end{align}
which then yields
\begin{align}\label{eqn:simp-003}
\f{\nr{d_{m+1}}}{\nr{d_m}}
 = r  \f{\delta_{m-1}^2}{\delta_m^2}
 = r \left( 1 + \f{c^2 r^{2(m-1)}(1-r^2)}{1 + c^2 r^{2m}} \right)
 = r  \left( 1 + \eps^d_{m+1} \right),
\end{align}
where
\begin{align}\label{eqn:simp-004}
0 <  \eps^d_{m+1} =  \f{c^2 r^{2(m-1)}(1-r^2)}{\delta_m^2} <  c^2 r^{2(m-1)}(1-r^2).
\end{align}

The first accelerated iterate 
$u_{m+2} = (1 - \gamma_{m+1})v_{m+2} + \gamma_{m+1} v_{m+1}$, 
is then given by
\begin{align}\label{eqn:simp-005}
&  \lambda \delta_{m+1}^{-1}\left\{
(1-\gamma_{m+1})  ( \varphi + cr^{m+2}\psi)
+ \gamma_{m+1}(\varphi + c r^{m+1} \psi)
\right\}
\nonumber \\
& = \lambda \delta_{m+1}^{-1} \left\{
\varphi \left( 1 - \gamma(1 - \delta_{m+1}\ /\delta_m) \right) 
+ cr^{m+1} \psi 
\left( r - \gamma_{m+1}(r  - \delta_{m+1}/\delta_m) \right)
\right\}.
\end{align}

The expression for $u_{m+2}$ can be simplified using a little algebra, 
and the inequality 
$\sqrt{1-x} = 1-y$ for some $x/2 < y < x$, if $0 < x < 1$, by
\begin{align}\label{eqn:simp-006}
1 - \delta_{m+1}\ /\delta_m  
= 1 - \left(1 - \f{c^2r^{2m}(1-r^2)}{ 1 + c^2r^{2m}} \right)^{1/2} 
= \eps^u_{m+2},
\end{align}
where $0 < \eps^u_{m+2} < c^2 r^{2m}(1-r^2)$.
From \eqref{eqn:simp-006}, it follows that
\begin{align}\label{eqn:simp-007}
r - \delta_{m+1}\ /\delta_m = r - (1 - \eps^u_{m+2}).
\end{align}
Applying \eqref{eqn:simp-006} and \eqref{eqn:simp-007} to \eqref{eqn:simp-005}
with $\gamma_{m+1} = -\nr{d_{m+1}}/\nr{d_m}$ expressed as \eqref{eqn:simp-003},
allows the expansion of $u_{m+2}$ by 
\begin{align}\label{eqn:simp-008}
&\lambda \delta_{m+1}^{-1}\left\{
\varphi \left(1\! - \gamma_{m+1}\eps^u_{m+2} \right)
 + cr^{m+1}\psi \left(r - \gamma_{m+1}(r - 1 + \eps^u_{m+2} \right)
\right\}
\nonumber \\
& \!= \! \lambda \delta_{m+1}^{-1} \! \left\{
\varphi \left(1\! +r \eps^u_{m+2} (1\! +\! \eps^d_{m+1}) \right)
\! +\! cr^{m+1}\psi \left(r +r(1+\eps^d_{m+1})(r - 1 +\! \eps^u_{m+2} \right)
\right\}
\nonumber \\
& \!=\! \lambda \delta_{m+1}^{-1} \! \left\{
\varphi \left(1\! +r \eps^u_{m+2} (1\! + \!\eps^d_{m+1}) \right)
 \!+\! cr^{m+2}\psi \left(-\eps^d_{m+1} \!+\! (1 + \eps^d_{m+1} )(r  +\! \eps^u_{m+2} \right)
\right\} \!\!.
\end{align}
The term multiplying $\psi$ can be written as
\begin{align*}
cr^{m+3}\left(1 + \f{\eps^u_{m+2}}{r}(1 + \eps^d_{m+1})
- \f{1-r}{r} \eps^d_{m+1} \right),
\end{align*}
which allows, for $r > 1/2$, and $m$ large enough so $c^2 r^{2(m-1)}(1-r^2) < 1$,
that the accelerated iterate $u_{m+2}$ may be written as 
\begin{align}\label{eqn:simp-009}
u_{m+2} & = \lambda \delta_{m+1}^{-1}\left\{
\varphi(1 + \theta_{m+2}) + cr^{m+3} (1 + \eta_{m+2}) \psi\right\},
\nonumber \\
 0 < \theta_{m+2} &< 2 r \eps^u_{m+2} < 2 c^2 r^{2m+1}(1-r^2),
\nonumber \\
|\eta_{m+2}| &< 2 \eps^d_{m+1} < 2 c^2 r^{2(m-1)}(1-r^2). 
\end{align}

Continuing the iteration up through the computation of $u_{m+3}$ before moving 
onto the inductive step, 
$x^\gamma_{m+1}$ the (or a, if $A$ is singular) $A$-preimage of $u_{m+2}$,
$x_{m+2}$ the $l_2$ normalization of $u_{m+2}$ from \eqref{eqn:simp-009}, and
$v_{m+3}$ are given by
given by
\begin{align}\label{eqn:simp-010}
x^\gamma_{m+1} & = \delta_{m+1}^{-1}\left\{
\varphi(1 + \theta_{m+2}) + cr^{m+2} (1 + \eta_{m+2}) \psi \right\},
\nonumber \\
x_{m+2} & = \delta_{m+2}^{-1}\left\{
\varphi(1 + \theta_{m+2}) + cr^{m+3} (1 + \eta_{m+2}) \psi \right\},
\nonumber \\
v_{m+3} & = \lambda \delta_{m+2}^{-1}\left\{
\varphi(1 + \theta_{m+2}) + cr^{m+4} (1 + \eta_{m+2}) \psi \right\}, ~\text{ with }
\nonumber \\
\quad \delta_{m+2} &= 
\left( (1 + \theta_{m+2})^2 + c^2 r^{2(m+3)} (1 + \eta_{m+2})^2  \right)^{1/2}.
\end{align}

The next Rayleigh quotient 
$\lambda_{m+1} = (u_{m+2},x^\gamma_{m+1})/\nr{x^\gamma_{m+1}}^2$, is 
given by
\begin{align}\label{eqn:simp-011}
\lambda_{m+1} & = \lambda \f{(1 + \theta_{m+2})^2 + c^2 r^{2(m+2)+1}(1 + \eta_{m+2}^2)}
{(1 + \theta_{m+2})^2 + c^2 r^{2(m+2)}(1 + \eta_{m+2}^2)}. 
\end{align}
To expedite the process of computing the residual, first notice that
\begin{align}\label{eqn:simp-012}
&1 -\f{(1 + \theta_{m+2})^2 + c^2 r^{2(m+2)+1}(1 + \eta_{m+2}^2)}
{(1 + \theta_{m+2})^2 + c^2 r^{2(m+2)}(1 + \eta_{m+2})^2}
\nonumber \\ 
& = \f{(1-r)c^2 r^{2(m+2)}(1 + \eta_{m+2}^2)}
{(1 + \theta_{m+2})^2 + c^2 r^{2(m+2)}(1 + \eta_{m+2})^2}, ~\text{ and }
\nonumber \\
&r\!-\!\f{(1 + \theta_{m+2})^2 + c^2 r^{2(m+2)}(1 + \eta_{m+2}^2)}
{(1 + \theta_{m+2})^2 + c^2 r^{2(m+2)}(1 + \eta_{m+2}^2)}
\!=\! \f{(r-1)(1 + \theta_{m+2})^2}
{(1 + \theta_{m+2})^2 + c^2 r^{2(m+2)}(1 + \eta_{m+2}^2)}.
\end{align}

Applying \eqref{eqn:simp-012} along with the expansions 
\eqref{eqn:simp-009}-\eqref{eqn:simp-011},
the residual $d_{m+2} = u_{m+2} - \lambda_{m+1}x^\gamma_{m+1}$ is then
\begin{align*}
d_{m+2} & =
\f{\lambda}{\delta_{m+1}} 
\f{cr^{2(m+2)}(1-r)(1 + \theta_{m+2})(1 + \eta_{m+2})}
{(1 + \theta_{m+2})^2 + c^2 r^{2(m+2)}(1 + \eta_{m+2}^2)}
\nonumber \\
\times &
\left\{
cr^{m+2}(1 + \eta_{m+2}) \varphi + (1 + \theta_{m+2})\psi
\right\},
\end{align*}
and therefore its norm is
\begin{align}\label{eqn:simp-013}
\nr{d_{m+2}} & = \f{\lambda cr^{2(m+2)}(1-r)(1 + \theta_{m+2})(1 + \eta_{m+2})}
{\delta_{m+1}\delta_{m+2}}.
\end{align}
Now, from \eqref{eqn:simp-004} and \eqref{eqn:simp-013} the next ratio of residual
norms is given by 
\begin{align}\label{eqn:simp-014}
\f{\nr{d_{m+2}}}{\nr{d_{m+1}}}
&= r^2 \f{(1 + \theta_{m+2})(1 + \eta_{m+2}) \delta_m^2}{\delta_{m+1} \delta_{m+2}}
\nonumber \\
&= r^2 \f{(1 + \eta_{m+2}) (1 + c^2 r^{2m})}
{(1 + c^2 r^{2(m+1)})^{1/2} (1 + c^2 r^{2(m+3)}(1+\eta_{m+2})^2/(1+\theta_{m+2})^2)^{1/2}}
\nonumber \\
&= r^2 \f{(1 + \eta_{m+2}) (1 + c^2 r^{2m})}{1 + \alpha c^2 r^{2(m+1)}},
\end{align}
where $\alpha$ lies in the interval between $1$ and 
$r^4(1+\eta_{m+2})^2/(1+\theta_{m+2})^2.$ 
This last simplification is justified by the observation 
that both terms in the denominator of \eqref{eqn:simp-014} are the square roots of
positive perturbations of one. Then 
\begin{align}\label{eqn:simp-014a}
\f{\nr{d_{m+2}}}{\nr{d_{m+1}}}
& = r^2 \left( 1 + \f{\eta_{m+2}(1 +  c^2 r^{2m}) + c^2 r^{2m}(1 - r^2 \alpha)}
{1 + \alpha c^2 r^{2(m+1})} \right)
 = r^2 (1 + \eps^d_{m+2}),
\nonumber\\ 
|\eps^d_{m+2}|&< 2|\eta_{m+2}| + c^2 r^{2m}(1-r^2 \alpha)
< c^2 r^{2(m-1)}( 2(1-r^2) +r^{2}(1-r^2 \alpha)).
\end{align}

From the expressions for $v_{m+2}$ and $v_{m+3}$ in \eqref{eqn:simp-001}
and \eqref{eqn:simp-009}, 
respectively, the next extrapolated iterate 
$u_{m+3} = (1-\gamma_{m+2})v_{m+3} + \gamma_{m+2}$, can
be written as
\begin{align}\label{eqn:simp-0in}
u_{m+3} & = \lambda \delta_{m+2}^{-1} \Bigg\{
\varphi \left( (1 + \theta_{m+2}) 
- \gamma_{m+2}\left((1 + \theta_{m+2}) - \f{\delta_{m+2}}{\delta_{m+1}}\right) \right)
\nonumber \\
& +  c r^{m+2}\psi \left( r^2(1 + \eta_{m+2})
- \gamma_{m+2} \left( r^2(1 + \eta_{m+2}) - \f{\delta_{m+2}}{\delta_{m+1}}\right)
\right)\Bigg\}.
\end{align}
Applying \eqref{eqn:simp-014},
the terms multiplying the principal eigenvector $\varphi$ in \eqref{eqn:simp-0in}
reduce to
\begin{align}\label{eqn:simp-014d}
&1 + \theta_{m+2}(1 - \gamma_{m+2})- \gamma_{m+2}
\left(1 - \f{\delta_{m+2}}{\delta_{m+1}} \right)
\nonumber \\
& = 
1 + \theta_{m+2}(1 - \gamma_{m+2})-
r^2(1+\theta_{m+2)}(1+\eta_{m+2})\delta_m^2\left( 
\f{1}{\delta_{m+1}\delta_{m+2}} - \f{1}{\delta_{m+1}^2}
\right)
\nonumber \\
& = 1 + \theta_{m+3}, \quad \theta_{m+3} = 
\bigo\left( r^{2m+1}\right).
\end{align}
Hereafter, we are concerned with tracking the perturbations in terms of 
powers of $r$. The lowest-order term in \eqref{eqn:simp-014d} comes from 
$\theta_{m+2}$, with a higher-order term of $\bigo\left(r^{2(m+2)} \right),$
resulting from $r^2(\delta_{m+1}^{-1}\delta_{m+2}^{-2}-\delta_{m+1}^{-2})$, by
a similar calculation to that in  \eqref{eqn:simp-014}.
Together, \eqref{eqn:simp-0in} and \eqref{eqn:simp-014d} show the extrapolation does 
not do much damage to preserving the 
component of $u_{m+3}$ along $\varphi$. Next, consider the terms multiplying
the second eigenvector $\psi$, which the extrapolation is designed to reduce
from $\bigo\left({r^{m+4}}\right)$ (as in $v_{m+3}$) to $\bigo(r^{m+6})$.
Applying the expression for $\gamma_{m+2} = -\nr{d_{m+2}}/\nr{d_{m+1}}$, from the first
line of \eqref{eqn:simp-014}, allows 
\begin{align}\label{eqn:simp-014d1}
&r^2(1+\eta_{m+2}) + \gamma_{m+2}\f{\delta_{m+2}}{\delta_{m+1}}
\nonumber \\
& = r^2(1+\eta_{m+2})\left( 1 - (1-\theta_{m+2})
\left( 1 + \f{c^2 r^{2m}(1-r^2)}{1+c^2 r^{2m}} \right) \right)
\nonumber \\
& = r^4\cdot\bigop{r^{2(m-1)}},
\end{align}
and from \eqref{eqn:simp-014a} the remaining term satisfies
$\gamma_{m+2}r^2(1 +\eta_{m+2}) = r^4 
\bigop{r^{2(m-1)}}$.
Putting these two estimates into \eqref{eqn:simp-0in} yields
\begin{align}\label{eqn:simp-014e}
&c r^{m+2} \left( r^2(1 + \eta_{m+2})
- \gamma_{m+2} \left( r^2(1 + \eta_{m+2}) - \f{\delta_{m+2}}{\delta_{m+1}}\right) \right)
\nonumber \\
& = cr^{m+6}\cdot 
\bigop{r^{2(m-1)}}.
\end{align}
Summarizing, the extrapolated iterate $u_{m+3}$ can be expressed as
\begin{align}\label{eqn:simp-014f}
u_{m+3} & = \lambda \delta_{m+2}^{-1} \left\{(1 + \theta_{m+3}) \varphi 
+ cr^{m+6}(1 + \eta_{m+3}) \psi\right\},
\nonumber \\
\theta_{m+3} & = \bigop{r^{2m+1}},
\quad \eta_{m+3}  = \bigop{r^{2(m-1)}}.
\end{align}

The general inductive step is next shown in the following theorem.
\begin{theorem}\label{thm:simple}
Let $A$ be a positive semi-definite matrix with leading
orthonormal eigenvectors $\varphi$ and $\psi$ 
corresponding to eigenvalues $\lambda$ and $\mu$, with $\lambda > \mu$ 
and $r = \mu/\lambda > 1/2$.
Let $u_0 = \varphi + c \psi$, and suppose $m$ iterations of the power method,
alg. \ref{alg:power} are run before switching to the simple extrapolation method,
alg. \ref{alg:simple}, where $m$ is large enough so that $c^2 r^{2(m-1)}(1-r^2) < 1$.  
Then for $j \ge 1$,  
the extrapolation parameter $\gamma_{m+j} = -\nr{d_{m+j}}/\nr{d_{m+j-1}}$ 
and the accelerated iterate $u_{m+j}$ satisfy
\begin{align}\label{eqn:thm-simple}
\gamma_{m+j} &= -r^j(1 + \eps^d_{m+j}), \quad \eps^d_{m+j} = \bigop{r^{2(m-1)}}, 
\nonumber \\
u_{m+j+1} &= \lambda \left\{\varphi(1 + \hat\theta_{m+j+1}) + r^{m+\sum_{l = 1}^{j+1}l} 
(1 + \hat\eta_{m+j+1})\psi \right\}, ~\text{ with }
\nonumber \\
\hat \theta_{m+j+1} & = \bigop{r^{2m+1}},
~\text{ and }~
\hat \eta_{m+j+1}  = \bigop{r^{2(m-1)}}.
\end{align}
\end{theorem}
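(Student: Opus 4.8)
The plan is to prove \eqref{eqn:thm-simple} by induction on $j$, with the computations carried out above through \eqref{eqn:simp-014f} serving as the base cases $j=1$ and $j=2$. Writing $S_k = \sum_{l=1}^{k} l$ for brevity, the induction hypothesis at step $j$ is exactly the pair of assertions in \eqref{eqn:thm-simple}: that $\gamma_{m+j} = -r^j(1+\eps^d_{m+j})$ with $\eps^d_{m+j} = \bigop{r^{2(m-1)}}$, and that $u_{m+j+1} = \lambda\delta_{m+j}^{-1}\{\varphi(1+\hat\theta_{m+j+1}) + cr^{m+S_{j+1}}(1+\hat\eta_{m+j+1})\psi\}$ (the form of \eqref{eqn:simp-009} and \eqref{eqn:simp-014f}, with the normalization factor displayed), where $\hat\theta_{m+j+1} = \bigop{r^{2m+1}}$ and $\hat\eta_{m+j+1} = \bigop{r^{2(m-1)}}$. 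Starting from this form of $u_{m+j+1}$, the goal is to reproduce the same structure at step $j+1$.

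First I would generate, exactly as in \eqref{eqn:simp-010}, the quantities derived from $u_{m+j+1}$: its $A$-preimage $x^\gamma_{m+j}$, whose $\psi$-component carries one lower power $r^{m+S_{j+1}-1}$; its $l_2$-normalization $x_{m+j+1}$; and the image $v_{m+j+2} = Ax_{m+j+1}$, whose $\psi$-component carries one higher power $r^{m+S_{j+1}+1}$. The Rayleigh quotient $\lambda_{m+j}$ and residual $d_{m+j+1} = u_{m+j+1} - \lambda_{m+j}x^\gamma_{m+j}$ are then computed as in \eqref{eqn:simp-011}--\eqref{eqn:simp-013}; the leading power of $\nr{d_{m+j+1}}$ is set by the $\psi$-component of the preimage $x^\gamma_{m+j}$, giving $\nr{d_{m+j+1}} \propto r^{m+S_{j+1}-1}$. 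Since the previous step gives $\nr{d_{m+j}} \propto r^{m+S_j-1}$, forming the ratio and using $S_{j+1}-S_j = j+1$ yields $\gamma_{m+j+1} = -\nr{d_{m+j+1}}/\nr{d_{m+j}} = -r^{j+1}(1+\eps^d_{m+j+1})$ with $\eps^d_{m+j+1} = \bigop{r^{2(m-1)}}$, which is the first assertion of \eqref{eqn:thm-simple} at step $j+1$. This mirrors the passage \eqref{eqn:simp-014}--\eqref{eqn:simp-014a}.

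Next I would form $u_{m+j+2} = (1-\gamma_{m+j+1})v_{m+j+2} + \gamma_{m+j+1}v_{m+j+1}$ as in \eqref{eqn:simp-0in}, and verify the two reductions separately. For the $\varphi$-component, the argument of \eqref{eqn:simp-014d} shows the extrapolation leaves it at $1 + \hat\theta_{m+j+2}$ with $\hat\theta_{m+j+2} = \bigop{r^{2m+1}}$, the lowest-order term being inherited from $\hat\theta_{m+j+1}$. For the $\psi$-component, substituting $\gamma_{m+j+1} = -r^{j+1}(1+\eps^d_{m+j+1})$ into the combination of $v_{m+j+2}$ (power $r^{m+S_{j+1}+1}$) and $v_{m+j+1}$ (power $r^{m+S_j+1}$) produces the same near-cancellation of leading terms as the idealized identity \eqref{eqn:extrap007}: the exact value $-r^{j+1}$ would send the power to $r^{m+S_{j+2}}$, and the error $\eps^d_{m+j+1} = \bigop{r^{2(m-1)}}$ perturbs only the surviving coefficient, giving $u_{m+j+2} = \lambda\delta_{m+j+1}^{-1}\{\varphi(1+\hat\theta_{m+j+2}) + cr^{m+S_{j+2}}(1+\hat\eta_{m+j+2})\psi\}$ with $\hat\eta_{m+j+2} = \bigop{r^{2(m-1)}}$, as in \eqref{eqn:simp-014e}--\eqref{eqn:simp-014f}. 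This completes the inductive step.

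The main obstacle is not any individual computation--each transcribes the base-case algebra--but controlling the perturbations \emph{uniformly in} $j$, so that the constants hidden in $\hat\theta_{m+j+1} = \bigop{r^{2m+1}}$, $\hat\eta_{m+j+1} = \bigop{r^{2(m-1)}}$, and $\eps^d_{m+j} = \bigop{r^{2(m-1)}}$ do not grow with $j$. The hypotheses $r > 1/2$ and $c^2 r^{2(m-1)}(1-r^2) < 1$ supply the needed bounds: the former keeps the recurring factors $(1-r)/r < 1$ and every ratio $\delta_{m+j}/\delta_{m+j+1}$ of the form $1 + \bigop{r^{2(m-1)}}$, while the latter validates each expansion $\sqrt{1-x} = 1-y$, $x/2 < y < x$, used to pass from $\delta$-ratios to the $\eps$, $\theta$, and $\eta$ estimates. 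The delicate point is why these errors do not accumulate additively across the steps. For the $\psi$-component this is because each extrapolation does not perturb the existing leading term but \emph{annihilates} it up to higher order--sending power $r^{m+S_{j+1}+1}$ down to $r^{m+S_{j+2}}$--and installs a fresh leading coefficient whose relative error $\hat\eta_{m+j+2}$ is set anew by the current inaccuracy $\eps^d_{m+j+1}$ in $\gamma_{m+j+1}$, with the carried-over contribution entering only at strictly higher order (compare \eqref{eqn:simp-014e}); hence $\hat\eta$ is reset to $\bigop{r^{2(m-1)}}$ at each step rather than summed. For the $\varphi$-component the accumulation is additive, as in \eqref{eqn:simp-014d}, but the increment at each step $j \ge 2$ is of strictly higher order than $r^{2m+1}$, so the total is a rapidly convergent series dominated by its first term $\hat\theta_{m+2} = \bigop{r^{2m+1}}$. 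Making these two domination claims precise--identifying at each step which term supplies the lowest power of $r$ and confirming the absence of build-up across the induction--is where essentially all the care is required.
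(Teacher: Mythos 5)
Your proposal is correct and follows essentially the same route as the paper's proof: induction on $j$ with the explicit computations through \eqref{eqn:simp-014f} as base cases, reproducing the Rayleigh-quotient/residual-ratio calculation \eqref{eqn:simp-016}--\eqref{eqn:simp-020} to get $\gamma_{m+j}=-r^j(1+\bigop{r^{2(m-1)}})$, and then the near-cancellation in the $\psi$-component together with the benign perturbation of the $\varphi$-component as in \eqref{eqn:simp-021}--\eqref{eqn:simp-024}. You also correctly isolate the one genuinely delicate point -- that $\hat\eta$ is reset rather than accumulated at each step while the $\hat\theta$ increments decay fast enough to be dominated by the first -- which is exactly what the paper establishes via \eqref{eqn:simp-022}--\eqref{eqn:simp-022b} and the recurrence $\theta_{m+k}=\theta_{m+k-1}-\gamma_{m+k-1}(\theta_{m+k-1}-\theta_{m+k-2})+h.o.t.$
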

\begin{proof}
The base steps with $j=1$ and $j=2$ are established in the preceding discussion.
There, and in the following, it will be shown that 
\[ u_{m+j+1} = \lambda \delta_{m+j}^{-1}
\left\{\varphi(1 + \theta_{m+j+1}) + r^{m+\sum_{l = 1}^{j+1}l} 
(1 + \eta_{m+j+1})\psi \right\},\] 
where for $j \ge 2$, 
$\delta_{m+j} = 1 + \bigop{r^{2m+1}}$, which is sufficient to establish
\eqref{eqn:thm-simple}.
Now, let us proceed inductively under the following hypotheses, where
the indices $J = m+j$,  $s = \sum_{l = 1}^{j-1}l$, and $t = s + j$,
are introduced for notational brevity.
Suppose, in accordance with \eqref{eqn:simp-009} and \eqref{eqn:simp-010}, that
\begin{align}\label{eqn:simp-015h}
\gamma_{m+k}& = -r^k(1 + \eps^d_{m+k}), \quad \eps^d_{m+k} = \bigop{r^{2(m-1)}},
\nonumber \\
u_{m+k+1} &= \lambda \delta_{m+k}^{-1}\left\{
\varphi(1 + \theta_{m+k+1}) + r^{m\sum_{l = 1}^k l} (1 + \eta_{m+k+1}) \psi
\right\},
\nonumber \\
\theta_{m+k+1} & = \bigop{r^{2m+1}}, \quad \eta_{m+k+1}  = \bigop{r^{2(m-1)}},
\end{align}
for $k < j$, where the normalization factors are
\begin{align}\label{eqn:simp-015a}
\delta_{m+k} & = \left( (1 + \theta_{m+k})^2 + c^2 r^{2(m+\sum_{l = 1}^k l)}
(1 + \eta_{m_k})^2
\right)^{1/2}. 
\end{align}

We will proceed to compute 
$\gamma_{J} = -\nr{d_{J}}/\nr{d_{J-1}}$, and the extrapolated iterate $u_{J+1}$. 
From \eqref{eqn:simp-015h} and alg. \ref{alg:simple}, we have
\begin{align}\label{eqn:simp-015}
u_{J} & = \lambda \delta_{J-1}^{-1}
\left\{ (1 + \theta_{J})\varphi + cr^{m+t}(1 + \eta_{J}) \psi \right\}, 
\nonumber \\
x_{J-1}^\gamma & = \lambda \delta_{J-1}^{-1}
\left\{ (1 + \theta_{J})\varphi + cr^{m+t-1}(1 + \eta_{J}) \psi \right\}. 
\end{align}
The Rayleigh quotient $\lambda_{J-1}$ is then given by
\begin{align}\label{eqn:simp-016}
\lambda_{J-1}  = \f{(u_{J}, x^\gamma_{J-1})}{(x^\gamma_{J-1}, x^\gamma_{J-1})}
= \lambda \f{(1 + \theta_{J})^2 + c^2 r^{2(m+t)-1}(1 + \eta_{J})^2}
{(1 + \theta_{J})^2 + c^2 r^{2(m+t-1)}(1 + \eta_{J})^2}.
\end{align}
The residual $d_{J}$ is given from \eqref{eqn:simp-015} and \eqref{eqn:simp-016} by
\begin{align}\label{eqn:simp-017}
u_{J} - \lambda_{J-1}x^\gamma_{J-1}
 = \f{\lambda}{\delta_{J-1}}\left\{
(1 + \theta_{J})(1-Z) + cr^{m+t-1}(1+\eta_{m+z})(r-Z)\psi
\right\},
\end{align}
where $Z$ is the ratio that multiplies $\lambda$ in the Rayleigh quotient 
\eqref{eqn:simp-016}. The factors $1-Z$ and $r-Z$ are then
\begin{align}\label{eqn:simp-018}
(1 - Z) & = \f{c^2 r^{2(m+t-1)}(1 + \eta_{J})^2(1-r) }
{(1 + \theta_{J})^2 + c^2 r^{2(m+t-1)}(1 + \eta_{J})^2}, 
\nonumber \\
(r-Z) & = \f{(1 + \theta_{J})^2(r-1)}
{(1 + \theta_{J})^2 + c^2 r^{2(m+t-1)}(1 + \eta_{J})^2}. 
\end{align}
Applying \eqref{eqn:simp-018} to \eqref{eqn:simp-017} yields the residual
$d_{J}$ as
\begin{align}\label{eqn:simp-019}
\f{\lambda(1 + \theta_{J})(1 + \eta_{J})(1-r)cr^{m+t-1}}
{\delta_{J-1}\left((1 + \theta_{J})^2 + c^2 r^{2(m+t-1)}(1 + \eta_{J})^2\right)}
\!\left\{
cr^{m+t-1}(1+\eta_{m+z})\varphi - (1 + \theta_{J}) \psi
\right\},
\end{align}
with its norm given by
\begin{align}\label{eqn:simp-019n}
\nr{d_{J}}\!
& =\! \f{\lambda(1 + \theta_{J})(1 + \eta_{J})(1-r)cr^{m+t-1}}
{\delta_{J-1}\left((1 + \theta_{J})^2 + c^2 r^{2(m+t-1)}(1 + \eta_{J})^2
\right)^{1/2}}.
\end{align}
It is useful to note the simplification,
$(1 + \eta_{m+k})/(1+ \theta_{m+k}) = 1 + \eps_k$, where
$\eps_k = \bigop{r^{2(m-1)}}$, $k = {j-1},j$.
Then applying the same process for $\nr{d_{J-1}}$, 
as \eqref{eqn:simp-019}-\eqref{eqn:simp-019n}, the ratio of residuals 
$\nr{d_{J}}/ \nr{d_{J-1}}$ 
is given by
\begin{align}\label{eqn:simp-020}
&  
r^{t-s} \f{(1 + \theta_{J})(1 + \eta_{J})}
         {(1 + \theta_{J-1}) (1 + \eta_{J-1})}
\cdot\f{ \delta_{J-2} \left((1 + \theta_{J-1})^2  + c^2 r^{2(m+s-1)} (1 + \eta_{J-1})^2
\right)^{1/2}}
{
\delta_{J-1} \left(
(1 + \theta_{J})^2 + c^2 r^{2(m+t-1)} (1 + \eta_{J})^2
\right)^{1/2}}
\nonumber \\
&=  
r^{j} \f{(1 + \eta_{J})}
         {(1 + \eta_{J-1})}
\cdot
\f{ \delta_{J-2} \left( 1  + c^2 r^{2(m+s-1)} (1 + \eps_{j-1})^2
\right)^{1/2}}
{
\delta_{J-1} \left( (1 + c^2 r^{2(m+t-1)} (1 + \eps_{j})^2
\right)^{1/2}}
\nonumber \\
& = r^j(1 + \eps^d_{J}), \quad \eps^d_{J} = \bigop{r^{2(m-1)}},
\end{align}
where the lowest-order terms arise from $\eta_{m+k}$, $k = \{j-1,j\}$.
This shows the first equation of \eqref{eqn:thm-simple}.

The next extrapolated iterate is given by 
$u_{J+1} = (1 - \gamma_{J})v_{J+1} + \gamma_{J} v_{J}$, 
where from the inductive hypothesis and alg. \ref{alg:simp-aug}
\begin{align*}
v_{J}  &= \lambda \delta_{J-1}^{-1}
\left\{ (1 + \theta_{J-1})\varphi + cr^{m+s+1}(1 + \eta_{J-1}) \psi \right\}, 
\\
v_{J+1}  &= \lambda \delta_{J}^{-1}
\left\{ (1 + \theta_{J})\varphi + cr^{m+t+1}(1 + \eta_{J}) \psi \right\}. 
\end{align*}
Then rearranging terms, $u_{J+1}$ can be written as
\begin{align}\label{eqn:simp-021}
u_{J+1} & = \lambda \delta_{J}^{-1}\Bigg\{ 
\varphi\left( (1 + \theta_{J}) - \gamma_{J}
\left( (1+ \theta_{J}) - (1+\theta_{J-1})
\f{\delta_{J}}{\delta_{J-1}} \right)\right)
\nonumber \\
& +\psi cr^{m+s+1}\left(r^j(1+\eta_{J}) - \gamma_{J}\left(
r^j(1+\eta_{J}) - (1+\eta_{J-1})\f{\delta_{J}}{\delta_{J-1}}
\right)\right).
\end{align}
Applying $\gamma_{J} = - \nr{d_{J}}/\nr{d_{J-1}}$, given by \eqref{eqn:simp-020},
the coefficient multiplying  $\varphi$ can be written as $(1 + \theta_{J+1})$, 
with perturbation
$\theta_{J+1} = \bigop{r^{2m+1}}$, where the lowest order term is inherited from 
$\theta_{J}$.

Looking more carefully at the terms multiplying $\psi$, we first want to see
the sum of the first and third terms multiplying $\psi c r^{m+s+1}$ 
in \eqref{eqn:simp-021},
is of order at least $r^{2(J-1)}$. Then after factoring $r^{2j}$ 
out of the entire expression, the remaining terms will be of order $r^{2(m-1)}$.
Similarly to the base case in \eqref{eqn:simp-014d1}, the expression for 
$\gamma_{J}$ from the next-to-last line of \eqref{eqn:simp-020} will be used to 
cancel like factors of $\eta_{J-1}$.
\begin{align}\label{eqn:simp-022}
&r^j(1+\eta_{J}) + \gamma_{J}(1+\eta_{J-1})\f{\delta_{J}}{\delta_{J-1}}
\nonumber \\ &
= r^j(1+\eta_{J})\left(1 - \f{\delta_{J-2} \delta_{J}}{\delta_{J-1}^2}
\cdot \left(
\f{ 1+c^2 r^{2(m+s-1)}(1 + \eps_{j-1})^2  }
{   1+c^2 r^{2(m+t-1)}(1 + \eps_{j})^2    }
\right)^{1/2}  \right).
\end{align}
The square-rooted term of \eqref{eqn:simp-022} 
is easily seen to reduce to a term of the form $1 + \bigop{r^{2(m+s-1)}}$.
Defining $s_0 = \sum_{l = 1}^{j-2}$, 
the term contributing the lowest-order perturbation 
$\delta_{J-2}\delta_j/\delta_{J-1}^2$,
can be understood by the factorization
\begin{align}\label{eqn:simp-022a}
 \f{ \left( \left( 1+c^2 r^{2(m+s_0)}(1 + \eps_{j-2})^2 \right)
        \left( 1+c^2 r^{2(m+t)}(1 + \eps_{j})^2 \right) \right)^{1/2}}
      {\left( 1+c^2 r^{2(m+s)}(1 + \eps_{j-1})^2 \right)}
\f{(1 + \theta_{J-2})(1+\theta_{J})}{(1+\theta_{J-1})^2},
\end{align}
where for the $j=3$ case, $\theta_{J-2} = \theta_{m+1} = 0$.
The first ratio of \eqref{eqn:simp-022a} produces a term of the form 
$1 + \bigop{r^{2(m+s_0)}}$, where $2 s_0 - j \ge -1 $ for $j \ge 3$,
so the perturbation is of order at least $r^j \cdot \bigop{r^{2m -1} }$.
The remaining term also produces a perturbation of at least order $r^{2m-1}$
for $j > 3$, and of order $r^{2(m-1)}$ for $j=3$.
For $j=3$, this is because $\theta_{m+3}$ and $\theta_{m+2}$ are both
$1 + \bigop{r^{2m+1}} = 1 + r^3 \cdot \bigop{r^{2(m-1)}}$.
For $j > 3$ we have
\begin{align}\label{eqn:simp-022b}
\f{(1 + \theta_{J-2})(1+\theta_{J})}{(1+\theta_{J-1})^2}
& = 1 + \f{(\theta_{J} - \theta_{J-1}) - (\theta_{J-1} - \theta_{J-2})
+ (\theta_{J} \theta_{J-2} - \theta_{J-1}^2)}
{(1 + \theta_{J-1})^2},
\end{align}
the lowest-order term of which is $(\theta_{J-1} - \theta_{J-2})$. Both
terms in the difference are of order $r^{2m+1}$, and may be analyzed as follows.
For $k \ge 3$, the term multiplying $\varphi$ in the iterate $u_{m+k}$ is given 
({\em cf.} \eqref{eqn:simp-021}) by
\begin{align*}
\lambda \delta_{m+k-1}^{-1}
\left( (1 + \theta_{m+k-1}) \!-\! \gamma_{m+k-1}
\left( (1+ \theta_{m+k-1}) \!-\! (1+\theta_{m+k-2})
\f{\delta_{m+k-1}}{\delta_{m+k-2}} \right)\right),
\end{align*}
by which $\theta_{m+k} = \theta_{m+k-1} - \gamma_{m+k-1}(\theta_{m+k-1} - \theta_{m+k-2})
+ h.o.t.$, where the higher-order terms ($h.o.t.$), will not be consequential.
Together with the inductive hypotheses on $\gamma_k$ and $\theta_k$, 
this shows that $\theta_{J-1} - \theta_{J-2}$ is of at least order ${r^{2m +j-1}}$. 
Applying this
back into \eqref{eqn:simp-022b}, \eqref{eqn:simp-022a} and \eqref{eqn:simp-022} shows
\begin{align*}
r^j(1+\eta_{J}) + \gamma_{J}(1+\eta_{J-1})\f{\delta_{J}}{\delta_{J-1}}
= r^{2j} \cdot \bigop{r^\nu}, ~\text{ with } \nu \ge 2(m-1).
\end{align*}

Finally, from \eqref{eqn:simp-020}, and the inductive hypothesis on $\eta_{m+j}$,
the remaining term of \eqref{eqn:simp-021} that
multiplies $\psi cr^{m+s+1}$, satisfies
\[
-\gamma_{J}r^j(1 + \eta_{J}) = r^{2j}(1 + \eps^d_{J})(1 + \eta_{J}) 
= r^{2j}\left(1 + \bigop{r^{2(m-1)}}\right).
\] 

Putting everything together into \eqref{eqn:simp-021}, and noting that
$m+s + 2j + 1 = m+t + (j+1)$, we have
\begin{align}\label{eqn:simp-024}
u_{J+1} &= \lambda \delta_{J}^{-1}\left\{
\varphi(1 + \theta_{J+1}) + r^{m+t+(j+1)} (1 + \eta_{J+1})
\right\},
\nonumber \\
\theta_{J+1} & = \bigop{r^{2m+1}}, \quad \eta_{J+1}  = \bigop{r^{2(m-1)}},
\end{align}
which establishes \eqref{eqn:thm-simple}.
\end{proof}
\section{Augmenting the simple method}\label{sec:augment}
In this section we discuss the motivation behind the augmented method, 
alg. \ref{alg:simp-aug}.
The key, and only substantial, difference between the augmented method 
and the simple method of alg. \ref{alg:simple}, is the use of the projection 
$p_k:=(v_{k+1}-u_k,x_k) = (Ax_k,x_k)-h_k$ to compute the extrapolation 
parameter
\[
\gamma_k = -\big(\nr{d_k}^2 + p_k^2\big)^{1/2}/ 
\big(\nr{d_{k-1}}^2 + (\eta p_{k-1})^2\big)^{1/2}.
\]
The parameter $\gamma_k$ for the augmented method features a user-defined 
tuning parameter $\eta \ge 1$. 
Large values of $\eta$ reduce the effect of the extrapolation; and, as discussed below,
more moderate values can help resolve transient modes earlier in the iteration.
For general initial iterates, 
the quantity $\gamma_k$ generally decreases as the algorithm converges, 
though the behavior need not be monotone. 
With $\eta$ chosen well, the augmented method often provides faster 
convergence than the simple method to the correct eigenvector.
As demonstrated in subsections \ref{subsec:ex1} and \ref{subsec:ex2}, 
if $\eta$ is chosen too large, the iteration 
remains stable but takes longer to converge, as it then follows
more closely the dynamics of the power iteration.

To better understand the distinction between the simple and augmented
algorithms, we may examine the 
difference between one step of each.
If the simple method is run without any preliminary power iterations 
beyond the first two, then both algorithms have the same 
$u_1$ and $u_2$, and it makes sense to compare outcomes for $u_3$.
 
In practice, the augmented method does not require a full orthogonal basis of 
eigenvectors.  But, for clarity of presentation, suppose $A$ is an $n\times n$ 
matrix, with an orthogonormal basis of eigenvectors $\{\phi_i\}_{i = 1}^n$, 
and spectrum
$$\underbrace{\mu_1,\mu_1,..,\mu_1}_{J},\mu_{J+1},...\mu_M,...\mu_n$$ where 
$\mu_1>\mu_{J+1}\geq ...\mu_{n}\geq 0$, and let an initial iterate $u_0$ be
\[
u_0 = \sum_{i=1}^J c_i \phi_i + c_M \phi_M + \sum_{i>J,i\not=M}^n c_i \phi_i.
\] 
The index $M$ is associated with the largest magnitude component, $|c_M|$, in the 
initial vector. 
 
We will evaluate $u_3$ starting from this initial iterate $u_0$ 
for both alg. \ref{alg:simp-aug} and alg. \ref{alg:simple}. 
The first two iterates $u_{k+1}$, $~k = 0,1$, are then
\begin{align}\label{eqn:aug-001}
u_{k+1} =\frac{1}{h_k}Au_k
= \frac{1}{\prod_{j = 0}^k h_j}
\left[\mu_1^{k+1} \sum_{i=1}^J  c_i \phi_i + \mu_M^{k+1} c_M \phi_M 
+ \sum_{i>J,i\not=M}^n \mu_i^{k+1} c_i \phi_i \right],
\end{align}
with Rayleigh quotients
$\lambda_k = {h_k^{-2}}(Au_k,u_k)$ given by
\begin{align*}
\frac{1}{(\prod_{j=0}^k h_j)^2}
\left[ \mu_1^{2k+1} \sum_{i=1}^J c_i^2 \phi_i + \mu_M^{2k+1} c_M^2 \phi_M 
+  \sum_{i>J,i\not=M}^n  \mu_i^{2k+1} c_i^2 \phi_i \right].
\end{align*}

The residual vectors $d_{k+1} = u_{k+1} - (\lambda_k/h_k) u_k$ are then given by
\[
\frac{1}{h_k} \left[ \mu_1^k \sum_{i=1}^J(\mu_i-\lambda_1) c_i \phi_i 
+ \mu_M^k(\mu_M-\lambda_1) c_M \phi_M + \sum_{i>J,i\not=M}^n \mu_i^k(\mu_i-\lambda_1) 
c_i \phi_i \right]. 
\]
The projection $p_1$ is $p_1= (Ax_1,x_1)-h_1 =\lambda_1-h_1$, and 
the projection $p_2$ is 
\[
p_2=\left(\frac{1}{h_2}Au_2 - u_2, \frac{1}{h_2}u_2\right)
= \frac{1}{h_2^2}(Au_2,u_2) -h_2 = \lambda_2 - h_2.
\] 
For the first extrapolated step of each method, keeping
track of the two different methods $u^{acc}$, where $acc = \{s,au\}$ for the simple
and augmented methods respectively, we have
\begin{align*}
\gamma^{au}_2 &= 
-\left[\|d_2 \|^2+|p_2|^2\right]^{1/2}\slash \left[\|d_1 \|^2+\eta^2|p_1|^2\right]^{1/2},
\quad 
\gamma^{s}_2 = -\left[\|d_2 \|\right]\slash \left[\|d_1 \|\right]
\\
u^{acc}_3 &= \frac{1}{h_2}(1-\gamma_2^{acc})Au_2 
+ \frac{\gamma_2^{acc}}{h_1} Au_1.
\end{align*} 

In particular, from \eqref{eqn:aug-001}, the components of $u_3^{acc}$ which lie in the 
dominant eigenspace for each method are
\begin{align}\label{eqn:aug-002}
\proj_{span(\phi_1,..\phi_J)}u_3^{acc} 
= \mu_1^2\frac{\gamma^{acc}_2}{h_0h_1h_2}\left[ \sum_{i=1}^J c_i (h_2-\mu_1)\phi_i\right].
\end{align}
The ratio of norm of these two projections is then nothing but the ratio of 
the extrapolation parameters
$  {\nrs{\proj_{span(\phi_1,..\phi_J)}u_3^{au}}}/
{\nrs{\proj_{span(\phi_1,..\phi_J)}u_3^{s}}} 
= {\gamma_2^{au}}/{\gamma_2^s}.$
When this ratio is larger than one, $u_3^{au}$ yields a better 
approximation to $\mu_1$ in its Rayleigh quotient. 
This is satisfied when
\[ \left| \frac{\gamma_2^{au}}{\gamma_2^s}\right|^2 
= \frac{\|d_2\|^2+|p_2|^2}{\|d_2\|^2} 
\left(\frac{\|d_1\|^2}{\|d_1\|^2+\eta^2 |p_1|^2}\right) >1 
\qquad \mbox{iff} \qquad 
\left| \frac{p_2}{p_1}\right| >\eta\frac{\|d_2\|}{\|d_1\|}.
\]

Similarly to \eqref{eqn:aug-002}, the projection onto the eigenspace with the 
largest initial coefficient is given by
\begin{align}
\proj_{span(\phi_M)}u_3^{acc} 
= \left( \f{\mu_M}{\mu_1}\right)^2\frac{\gamma^{acc}_2}{h_0h_1h_2} c_M (h_2-\mu_M)
\phi_M.
\end{align}
If $|c_M|>>|c_i|$, as
\[
|c_M|\left(\frac{\mu_M}{\mu_1}\right)^k <<|c_i|,
\] 
the augmented method magnifies the difference between the growth in the dominant
and principle subdominant component.  As the iteration continues and 
subdominant modes are sequentially suppressed, the iteration reduces essentially
to simple method.  Its behavior is then described well by alg. \ref{alg:simple}.
\section{Numerical results}\label{sec:numerics}
In this section we present numerical results that illustrate the theory and
demonstrate the presented methods.  The first two examples 
illustrate convergence rates predicted by the theory, and robustness with respect to 
initial iterates and nonsymmetric perturbations.
The third example compares the simple and augmented methods with
the inverse free preconditioned Krylov subspace methods of \cite{GoYe02,QuYe10}.
The final two examples demonstrate use of the algorithms for finite element 
discretizations of Neumann and Steklov eigenvalue problems. 
\subsection{Example 1: Benchmarking}\label{subsec:ex1}
We will start by looking at a simple problem to verify and illustrate the theory.  
Then we will look at three benchmark examples using matrices of different sizes.

First, consider the simple method alg. \ref{alg:simple}, started after $m=10$ power
iterations, applied to the diagonal matrix $A = \diag([1, 0.9, 0.5, \ldots, 0.5])$.
In the following results, $A$ is $50 \times 50$, but the number of padding entries
of $0.5$ appears inconsequential. The iteration is started with $u_0$ 
a vector of ones. 
Table \ref{tab:ex0} shows $\gamma_{j+1}/\gamma_j$ which according
to theorem \ref{thm:simple} should be approximately $r = 0.9$; $u_j(2)/r^{1 + \ldots +j}$, 
the component of the approximate eigenvector in the second eigendirection, normalized
by $r^{\sum_{l = 1}^j l}$, which should be approximately constant; and, the 
residual $\nr{d_j}$. Each of the quantities behaves as predicted, with the second 
eigencomponent decaying a little faster as the algorithm converges. This simple 
example confirms the theoretical convergence of the eigenvector, along with the 
residual.
\begin{table}
\begin{center}
\begin{tabular}{c|ccccccc}
$j$ & 1 & 2 & 3 & 4 & 5 & 6 & 7  \\
\hline
$\gamma_j/\gamma_{j-1}$ &
- & 0.912 & 0.899 & 0.887 & 0.886 & 0.893 & 0.899  
\\
$u_j(2)/r^{1 +\ldots + j}$ &
0.304 & 0.309 & 0.310 & 0.308 & 0.305 & 0.302 & 0.299
\\
$\nr{d_j}$ & 
2.4e-02 & 1.8e-02 & 1.2e-02 & 7.0e-03 & 3.7e-03 & 1.8e-03 & 7.5e-04
\\ \hline 
j & 8 & 9 & 10 & 11 & 12 & 13 & 14 \\ \hline
$\gamma_j/\gamma_{j-1}$ &
 0.900 & 0.899 & 0.898 & 0.900 & 0.905 & 0.909 & 0.899
\\
$u_j(2)/r^{1 +\ldots + j}$ &
0.295 & 0.291 & 0.287 & 0.285 & 0.281 & 0.266 & 0.200
\\
$\nr{d_j}$ & 
2.9e-04 & 9.9e-05 & 3.0e-05 & 8.5e-06 & 2.2e-06 & 5.0e-07 & 9.9e-08
\end{tabular}
\end{center}
\caption{The ratio of consecutive extrapolation parameters, the component along the
second exact eigenvector scaled by $r^{1 + \ldots _ j}$, 
and the norm of the residual for a diagonal matrix with
$r = 0.9$.}
\label{tab:ex0}
\end{table}

Next we demonstrate the simple extrapolation method of alg. \ref{alg:simple} and
the augmented method of alg. \ref{alg:simp-aug} compared to the power method, 
alg. \ref{alg:power}.
In each of these tests, alg. \ref{alg:simple} is started after $m=40$ initial power 
iterations, and the augmented method is run with parameter $\eta = 40$.
\begin{figure}
\centering
\includegraphics[trim = 5pt 5pt 10pt 10pt,clip = true, width=0.3\textwidth]
{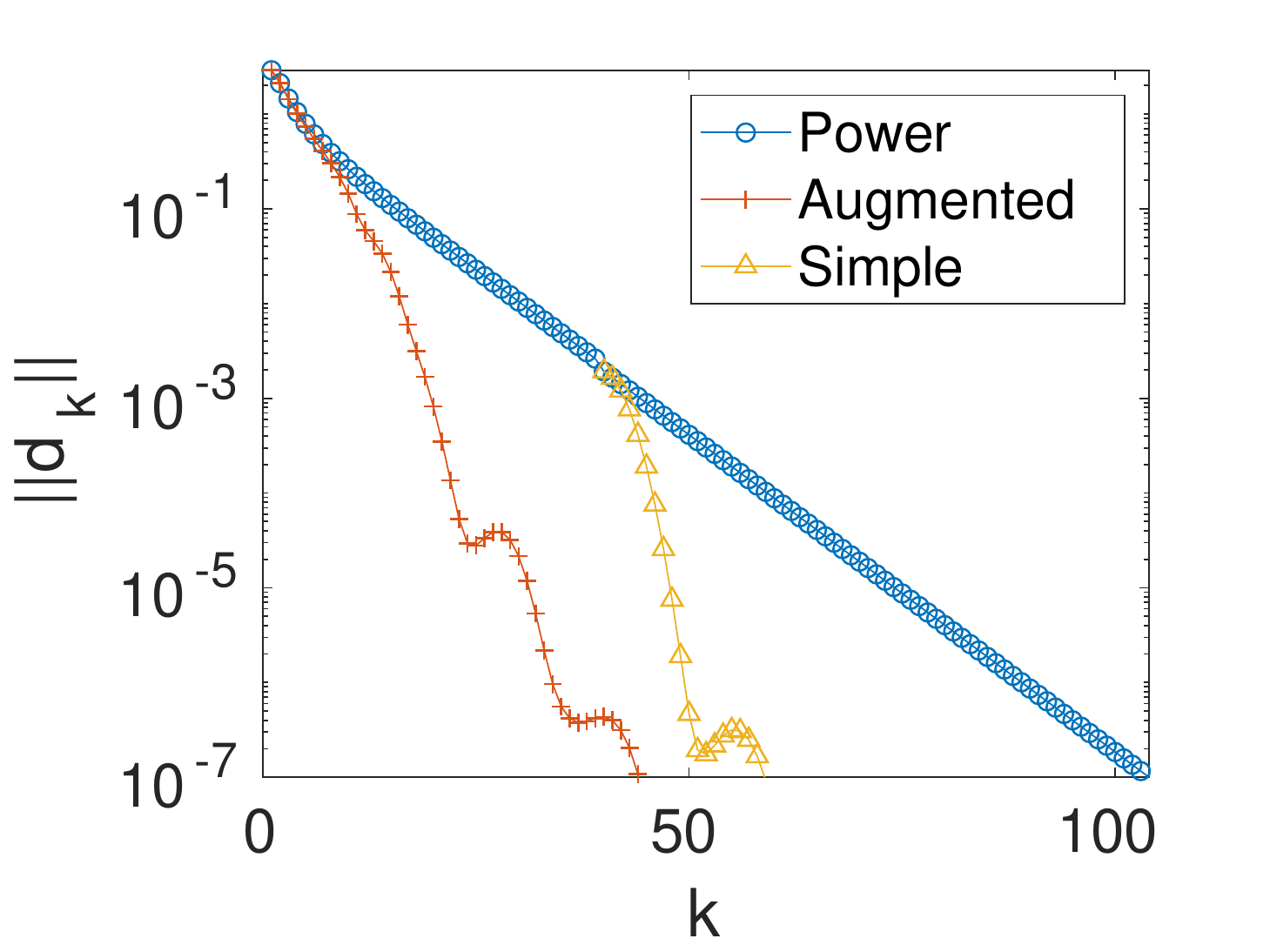}
\includegraphics[trim = 5pt 5pt 10pt 10pt,clip = true, width=0.3\textwidth]
{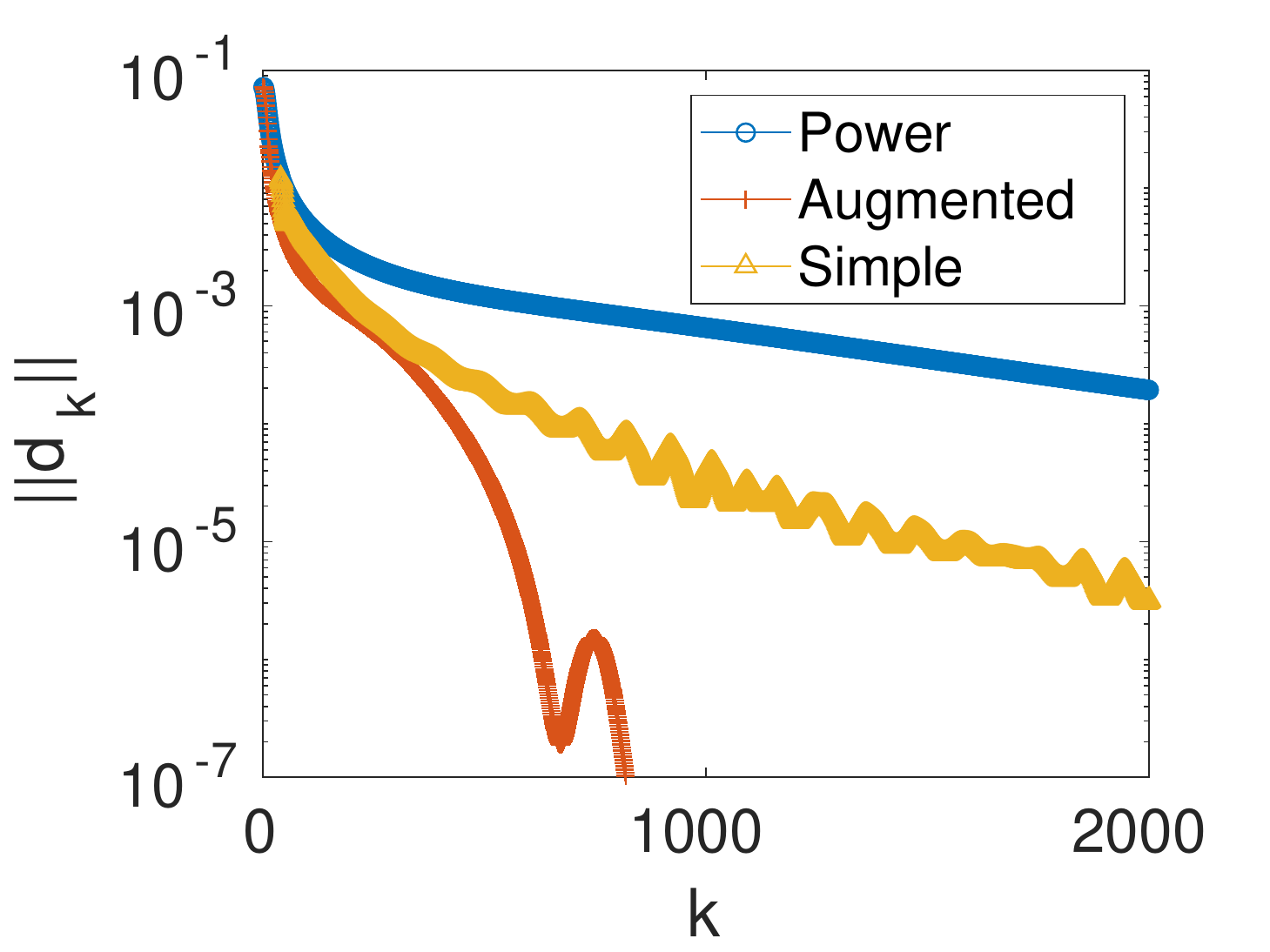}
\includegraphics[trim = 5pt 5pt 10pt 10pt,clip = true, width=0.3\textwidth]
{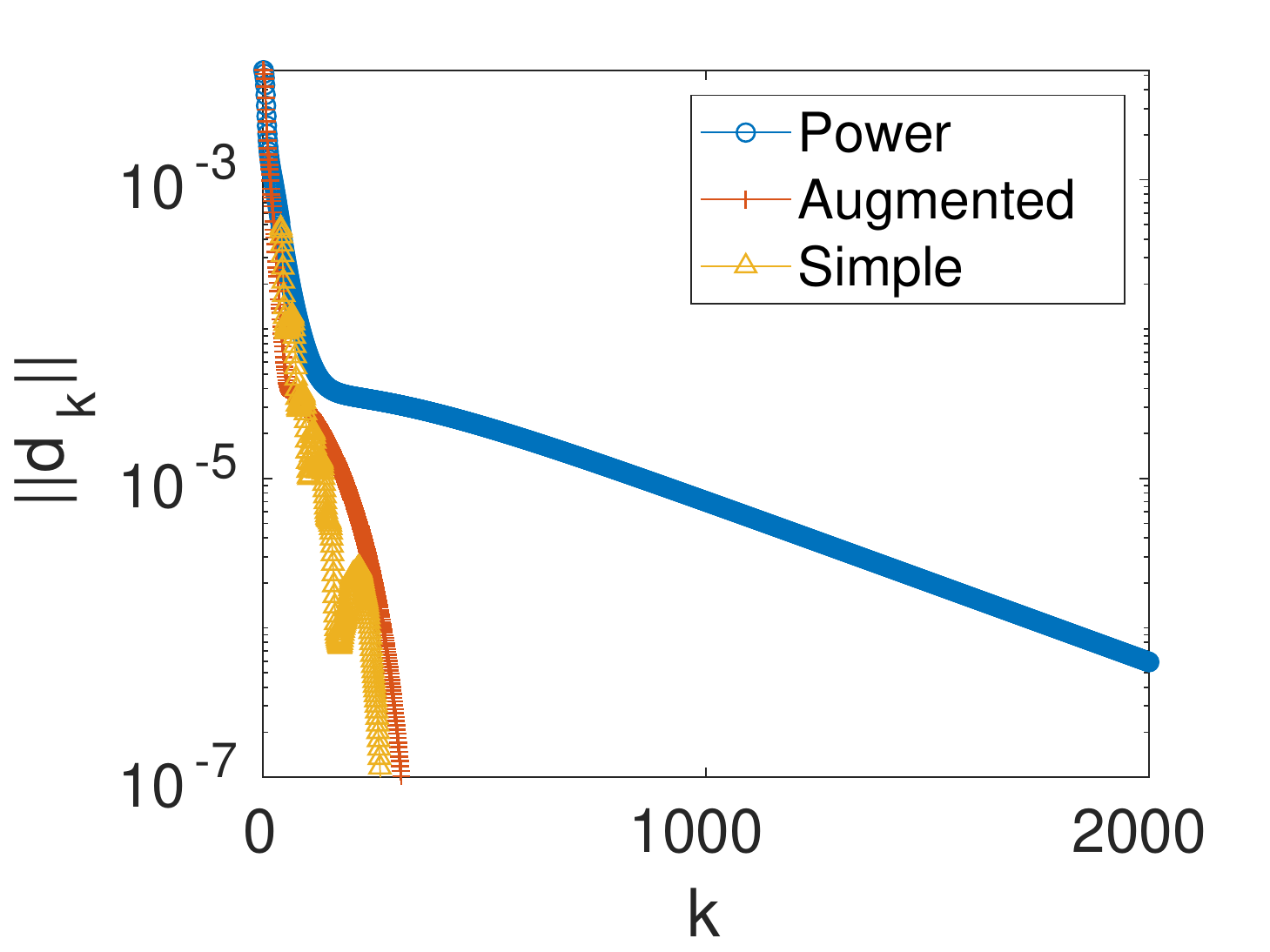}
\caption{Residual history for the Wilkinson matrix {\tt W21+}, with $n=21$ (left), 
a diagonal matrix with $n=1000$, leading eigenvalue 1 and remaining eigenvalues 
equally spaced between $0.75$ and $0.999$ (center), 
and a sparse matrix with $n=102158$ from a finite 
element problem describing temperature in a steel cylinder (right).}
\label{fig:ex1}
\end{figure}
This part of the example demonstrates the methods on benchmark 
problems of three different sizes.
\begin{itemize}
\item Matrix 1
is the Wilkinson matrix {\tt W21+}, which can be generated in Matlab by 
{\tt gallery('wilk',21)}. It is   
a tridiagonal matrix with pairs of nearly equal eigenvalues. The largest eigenvalues
are approximately $10.746$, and all but one of the eigenvalues are positive, 
with the negative eigenvalue approximately $-1.125$. 

\item Matrix 2 is a diagonal matrix of order $n=1000$, 
with leading eigenvalue 1, and remaining eigenvalues equally spaced between
$0.75$ and $0.999$, defined in Matlab by {\tt diag(v)}, with 
${\tt v = [1,linspace(0.75,0.999,1000)]}$.

\item Matrix 3 is a sparse matrix of order $n = 102158$, from a finite element problem
describing temperature a steel cylinder. 
It has $r \approx 0.9975$.
This matrix is available as 
{\tt thermomech\_TC}, from the SuiteSparse matrix collection \cite{DH11}.
\end{itemize}
Figure \ref{fig:ex1} shows the residual histories for each of these three matrices 
starting from an initial vector $u_0 = [1,1, \ldots, 1]$.

On each of these examples both the simple and augmented acceleration outperform the
power method; and, the augmented method demonstrates the exponential convergence
predicted from the theory.  The simple method shows more oscillatory behavior in the
second two cases, as additional eigencomponents play a dominant role, and are 
subsequently damped, as the iterations progress.

As a demonstration of robustness with respect to initial data, 
algorithms \ref{alg:power}-\ref{alg:simp-aug} are run on Matrix 1,2 and 3, 
starting with 100 different initial vectors $u_0$ determined by
the Matlab command {\tt rand(n,1)-0.5}. 
The augmented method alg. \ref{alg:simp-aug} is run with three different values
of the tuning parameter $\eta \!=\! \{20,40,80\}$.
As above, the simple method is started after 40 initial power iterations. 
Runs were terminated after a maximum
of 6000 iterations.  
The average number of iterations to residual convergence 
of {\tt tol} = $10^{-7}$ is reported in table \ref{tab:rand-init}.
The results show general agreement with those shown in figure \ref{fig:ex1}, 
demonstrating the methods are not overly sensitive to choice of initial iterate.
The last three columns of table \ref{tab:rand-init} show the computation with the three
different values of $\eta$ in the augmented method.  We see that there does appear to
be a best value for each problem, but the method is not overly sensitive to the choice.
Problems with many components competing for dominance, like Matrix 2, appear to benefit
from larger values, while problems featuring larger spectral gaps like Matrix 3
(see the performance of the power method in figure \ref{fig:ex1}) may show better efficiency
with smaller values.
\begin{table}
\begin{center}
\begin{tabular}{c|rrrrr}
Algorithm& \ref{alg:power} & \ref{alg:simple} & \ref{alg:simp-aug}, $\eta=20$ &
\ref{alg:simp-aug}, $\eta = 40$ & \ref{alg:simp-aug}, $\eta = 80$ \\
\hline
Matrix 1 & 107.6 & 58.8    & 58.6   & 42.9   & 42.1 \\
Matrix 2 & $>$6000 & 4295.1& 1457.4 & 1058.6 & 998.3\\
Matrix 3 & 2601.2 & 254.0  & 250.4  & 296.8  & 371.3
\end{tabular}
\end{center}
\caption{Average number of iterations to convergence of {\tt tol}= $10^{-7}$
over 100 initial iterates,
for the power method (alg. \ref{alg:power}), simple method (alg. \ref{alg:simp-aug}) and 
augmented method (alg. \ref{alg:simp-aug}).}
\label{tab:rand-init}
\end{table}

For the final example in this subsection we consider the comparative 
performance of the methods on a parameterized family of non-normal matrices.
The example is constructed so the non-normal part (nonsymmetric, in this case) 
becomes large compared to the normal (symmetric) part as the parameter $t$ increases.
While the assumptions behind the analysis in section\ref{sec:ideal} and \ref{sec:simple} 
include the orthogonality between the
dominant eigenvectors, the normality of system matrix $A$ was not assumed. 
Here, a parametrized matrix $A = A_t$ is given by
\begin{align}\label{eqn:At}
A_t = \begin{pmatrix}
\ddots & \ddots \\ & a_0 & t a_1 \\ & & \ddots & \ddots
\end{pmatrix}, 
~a_0(j) = j, ~ j = 1, \ldots, 100,
~a_1(j) = \left\{\begin{array}{cr}
1, & 1 \le j \le 50\\
0, & j > 50
\end{array} \right. .
\end{align}

Table \ref{tab:nnorm} shows the iterations to achieve a residual tolerance of $10^{-7}$,
from a starting vector of ones. The simple method is run with 40 initial power 
iterations, and the augmented method is run with $\eta = 40$.
\begin{table}
\begin{center}
\begin{tabular}{c|rrrrrrr}
$t$& 1 & 4 & 16 & 64 & 256 & 1024 & 4096\\
\hline
Alg. \ref{alg:power}    & 1604 & 1604 & 1604 & 1604 & 1604 & 1604 & 1604 \\ 
Alg. \ref{alg:simple}   & 580  & 580  & 580  & 399  & 544  & 650  & 829\\
Alg. \ref{alg:simp-aug} & 388  & 388  & 388  & 402  & 526  & 666  & 657 
\end{tabular}
\end{center}
\caption{Number of iterations to convergence of {\tt tol}= $10^{-7}$ for the 
matrix $A_t$ of \eqref{eqn:At},
for the power method (alg. \ref{alg:power}), simple method (alg. \ref{alg:simple}) 
and augmented method (alg. \ref{alg:simp-aug}).}
\label{tab:nnorm}
\end{table}
In constrast to the behavior of the inverse iteration for this problem
({\em cf.,} \cite{ipsen97}), the residual in this example is a good indicator of 
convergence to an analytical eigenpair for both eigenvalues and eigenvectors.
For each of the results shown in table table \ref{tab:nnorm}, 
the eigenvalue error is on the order
of $10^{-14}$, with corresponding eigenvector error on the order of $10^{-8}$.
Each iteration results in convergence to the dominant eigenpair with $\lambda = 100$.
\subsection{Example 2: Bad initial data}\label{subsec:ex2}
We illustrate section \ref{sec:augment} with some simple but extreme tests starting
with bad initial data. Figure \ref{fig:test0dk}, shows the performance of the three 
algorithms on $A=\diag([1,2, 0.01])$ with an initial iterate of 
$u_0=[0.01,0.01, 10^8]$, and 
the parameter $\eta=1$. The simple iteration is started 
without any additional power iterations. 
The initial iterate $u_0$ has its largest 
component in the wrong direction. Starting with iterate 3, the scale factors 
$\gamma_k$ are different for the two methods, and these in turn lead to different 
dynamics for the eigenvector approximations. But, we see the augmented and simple 
effectively forcing successive approximations to align with the dominant eigenvector 
direction $[0,1,0]$.
 \begin{figure}
  \centering
  \includegraphics[width=0.4\linewidth]{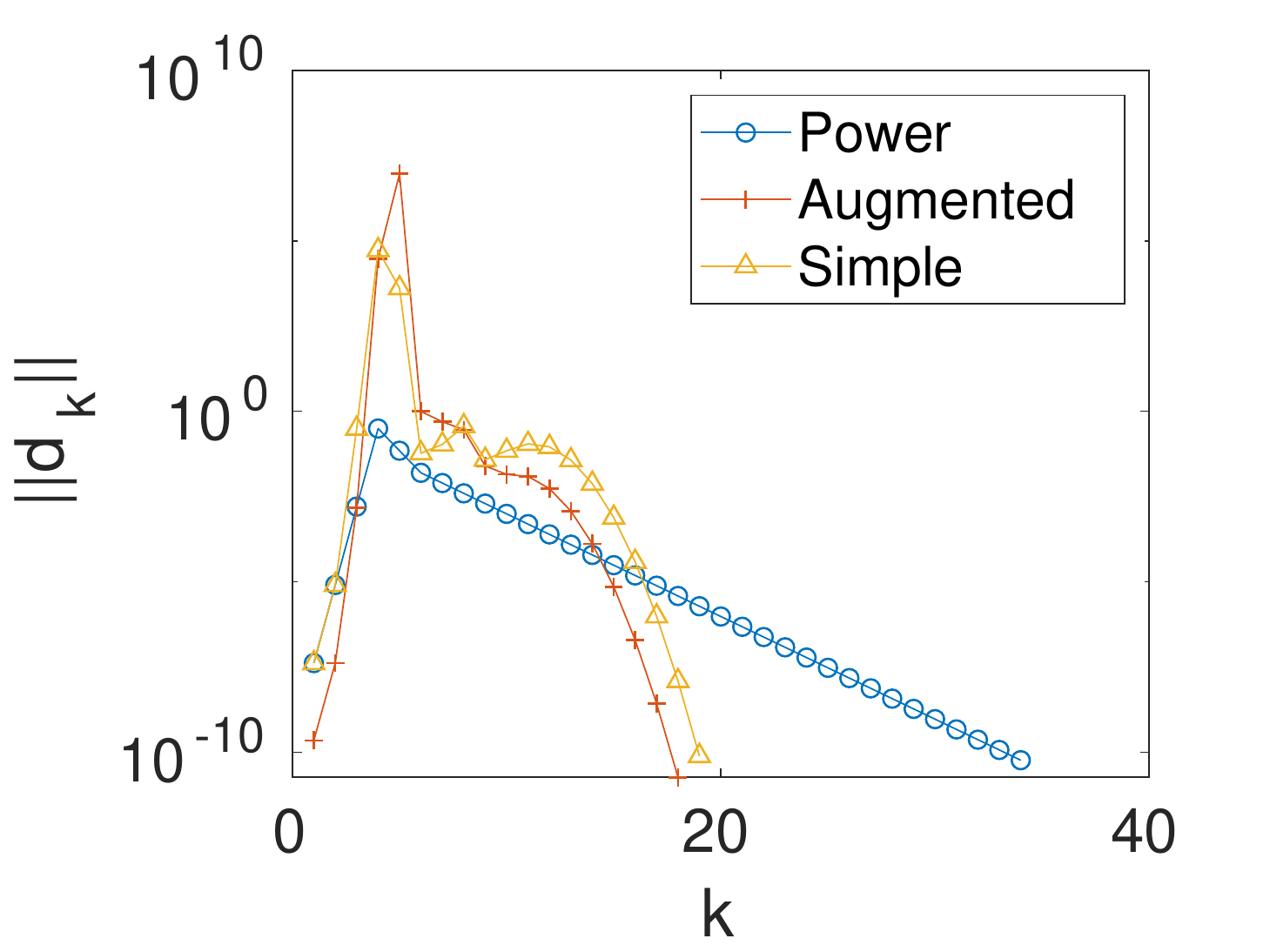}
  \includegraphics[width=0.4\linewidth]{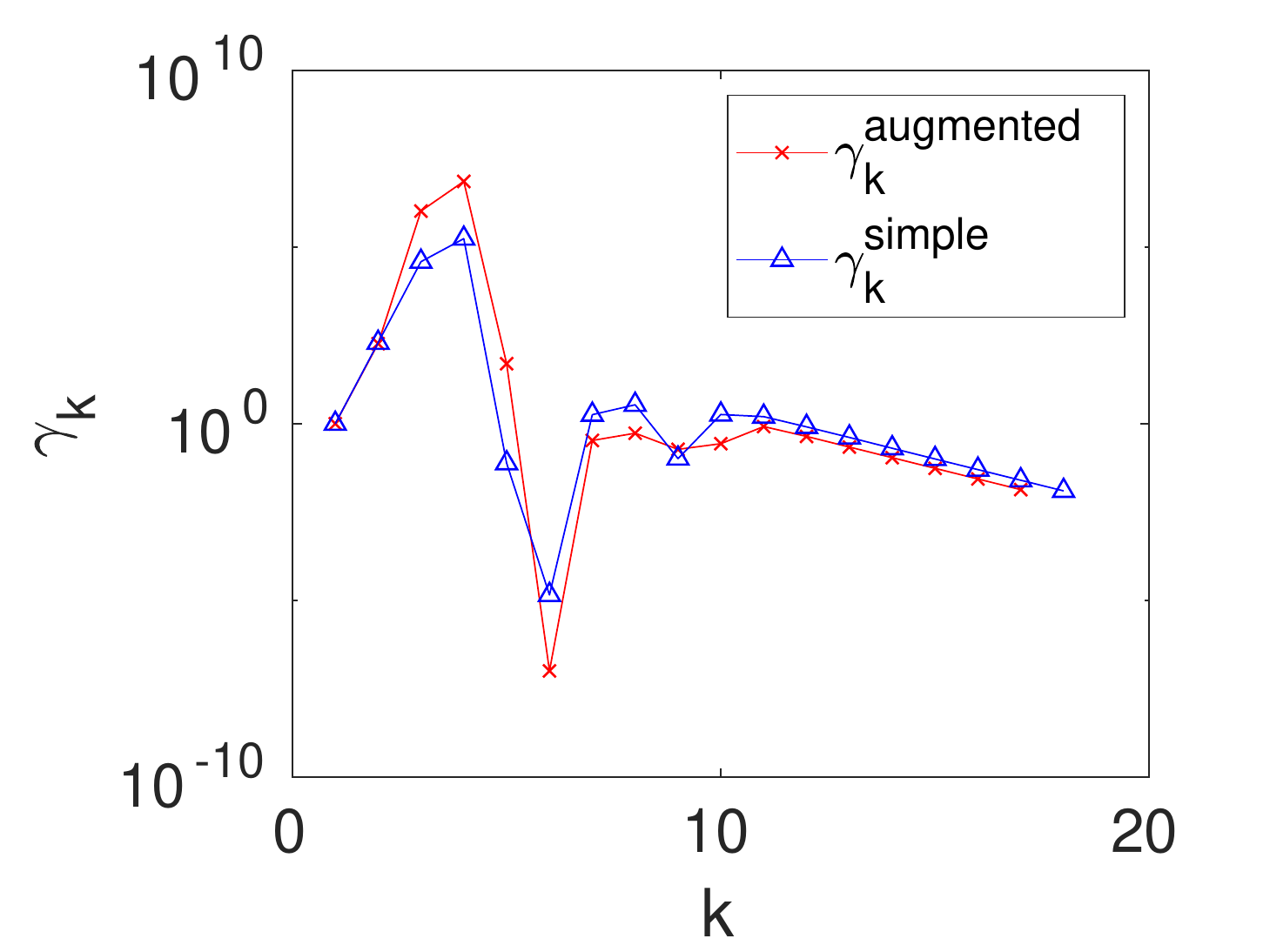}
  \caption{Residual histories (left) and extrapolation parameters (right) for 
  $A=\diag([1,2, 0.01])$ with an initial iterate of $u_0=[0.01,0.01, 10^8]$.}
  \label{fig:test0dk}
\end{figure}

The next example again features an extremely poor initial iterate.  Additionally,
the matrix $A=\diag([1.01, 1,0.1, 0.01]),$ features a small spectral gap. 
Here, $u_0=[0.01, 0.01,1, 10^9]$, so $c_M=10^9$, corresponds to the eigenvalue $0.01$, 
and the coefficient $c_1$ corresponding to the leading eigenvalue is $0.01$. 

The sequence of approximate (normalized) eigenvectors generated by each method are 
presented componentwise in figure \ref{fig:test1c}, to give a detailed view of how the
methods compare.
All methods quickly resolve the initially bad data, 
by damping out the fourth component (right plot) 
within the first few iterations, meanwhile
increasing the component in the dominant direction (left plot). 
The two accelerated methods are much more efficient (center plot) at damping out the 
second component, corresponding to the second eigenvalue, which is close to the first.

\begin{figure}
 \centering
 \includegraphics[width=0.3\linewidth]{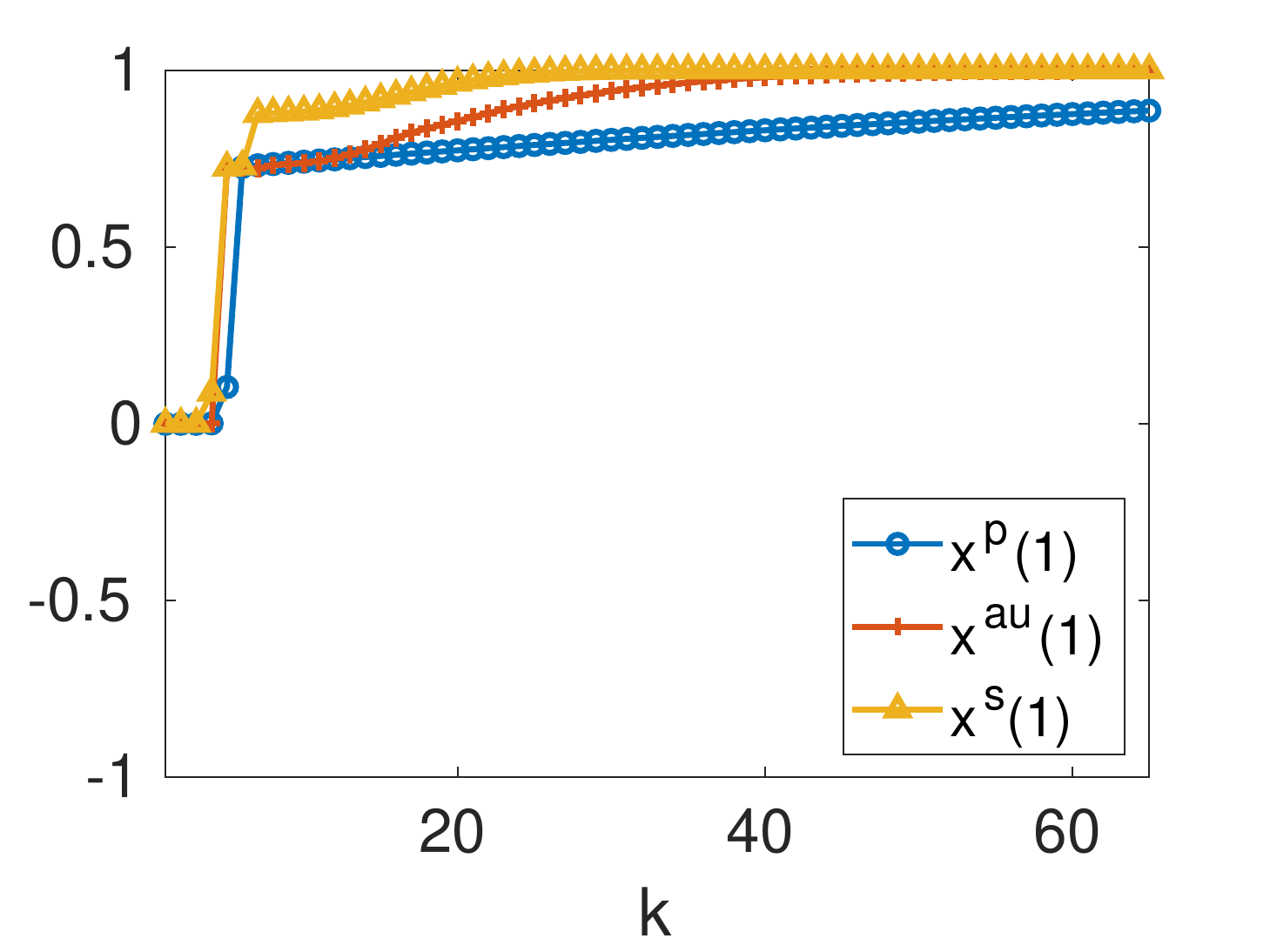}
 \includegraphics[width=0.3\linewidth]{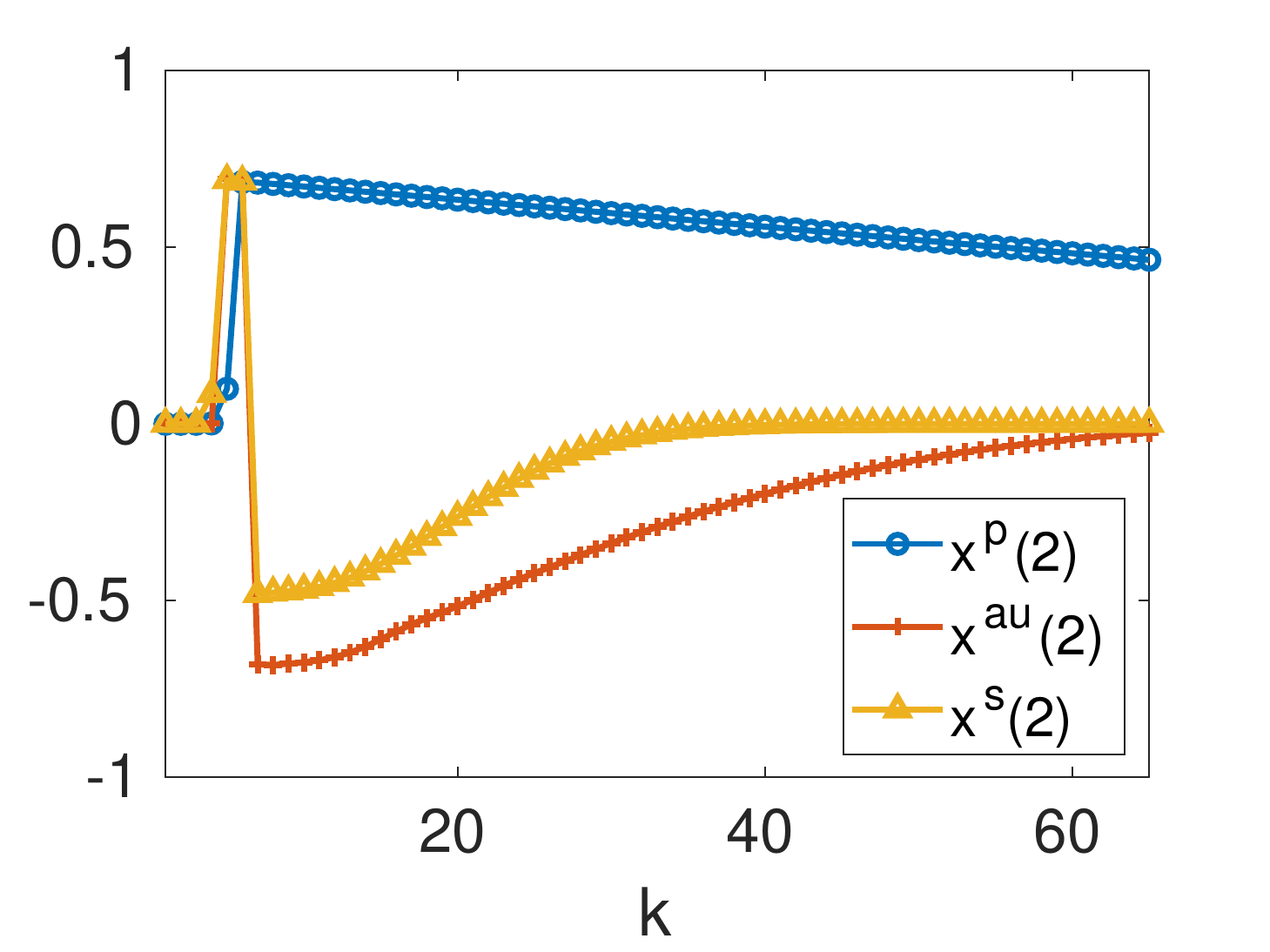}
 \includegraphics[width=0.3\linewidth]{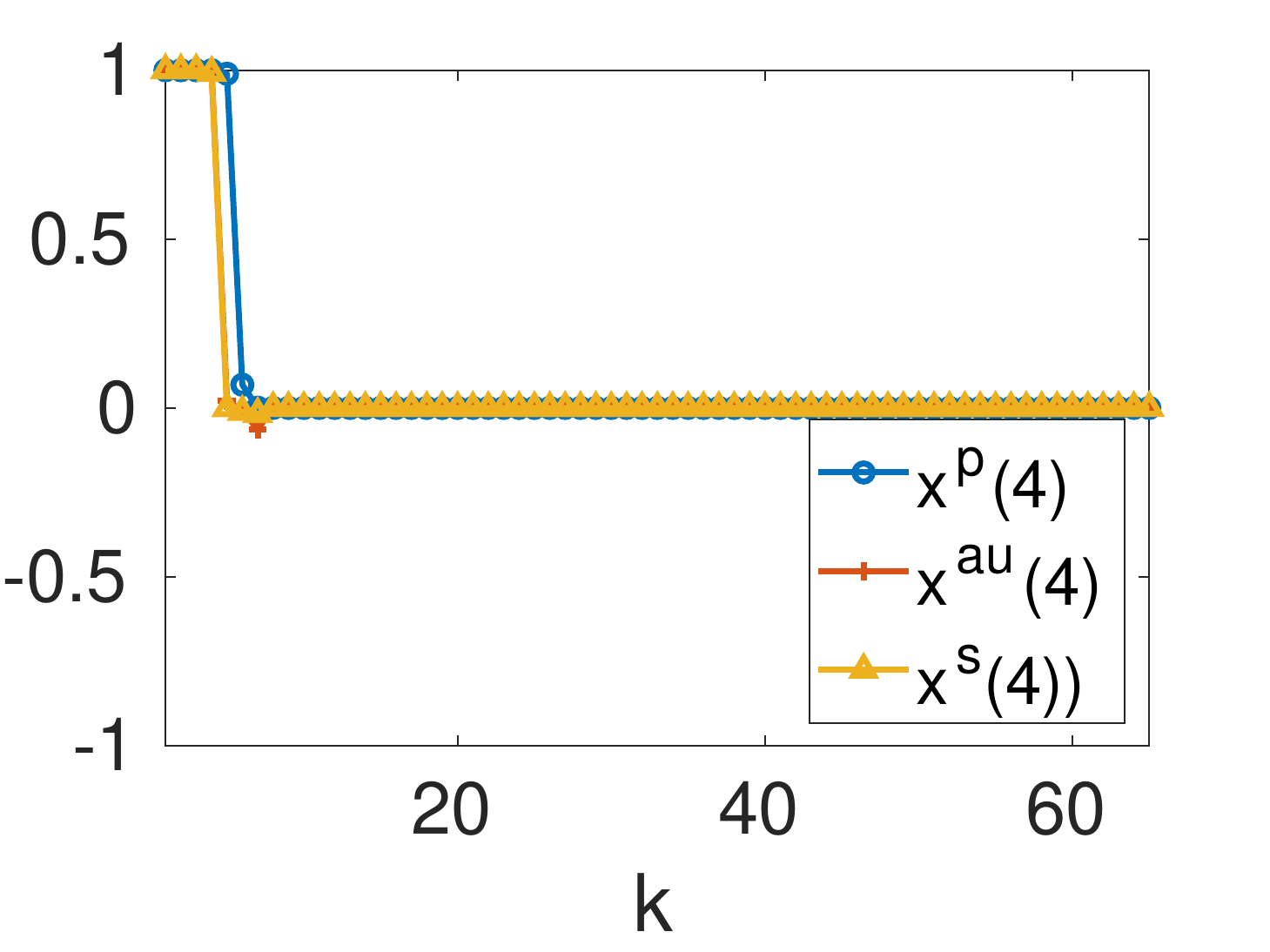}
  \caption{First (left), second (center) and fourth (right) components of the normalized 
  iterates for the power method $x^p$, augmented method $x^{au}$ with $\eta=10$, and
  the simple method $x^s$, for 
  $A=\diag([1.01, 1, 0.1, 0.01])$ with an initial iterate of $u_0=[0.01,0.01,1,10^9]$.
  }
\label{fig:test1c}
\end{figure}

\subsection{Comparison with Krylov subspace iterations}\label{subsec:ifpks}
In this section the runtimes for converging to dominant eigenpairs 
for algorithms \ref{alg:simp-aug} and \ref{alg:simple} are compared with the inverse 
free preconditioned
Krylov subspace method (EIGIFP), \cite[Algorithm 4]{GoYe02}, and the block inverse-free
preconditioned Krylov subspace method (BLEIGIFP) \cite[Algorithm 5]{QuYe10}. 
The second of these
two is designed to improve performance in the case of repeated or clustered eigenvalues.
Both algorithms are provided by their authors as stand-alone routines made available 
as m-files, from \url{https://www.ms.uky.edu/~qye/}.

The first matrix tested is Matrix 2 from subsection\ref{subsec:ex1}, with $n=1000$.  The
remaining tests are from the SuiteSparse matrix collection \cite{DH11}:
{\tt Si5H12} with $n= 19896$; 
{\tt c-65} with $n = 48066$;
{\tt Andrews} with $n = 60000$; and
{\tt thermomech\_TC} with $n = 102158$, which was also used in Example 1.

Each iteration was started with uniformly distributed initial data between $-1/2$ and 
$1/2$. Algorithm \ref{alg:simp-aug} was run with parameter $\eta = 45$, and 
alg. \ref{alg:simple} was started after 40 initial power iterations on each run.
EIGIFP and BLEIGIFP were run with their default parameter sets with output suppressed, 
in all cases to a tolerance of $10^{-7}$. 
Timing was performed using Matlab's {\tt tic} and {\tt toc}
commands.  The tests were done using Matlab R2018a on an 8 core intel Xeon W-2145 
CPU @ 3.70 GHz, with 64 GB memory. The results are summarized in table \ref{tab:runtimes}.
\begin{table}
\begin{center}
\begin{tabular}{c|rrrrr}
Algorithm& Alg. \ref{alg:simple} & Alg.\ref{alg:simp-aug} & EIGIFP & BLEIGIFP \\
\hline
Matrix 2            & 3.89e-02 &  1.03e-02 & 9.00e-03 & 2.70e-02 \\
{\tt Si5H12}        & 1.73e+00 &  7.22e-01 & 2.25e+00 & 6.81e-01 \\
{\tt c-65}          & 2.91e-02 &  1.45e-02 & 4.12e+00 & 8.55e-02 \\
{\tt Andrews}       & 8.30e+00 &  2.39e+00 & 1.95e+01 & 1.84e+00 \\
{\tt thermomech\_TC}& 6.60e-01 &  8.47e-01 & 3.63e+01 & 8.31e-01
\end{tabular}
\end{center}
\caption{Average time (sec) to convergence of {\tt tol}= $10^{-7}$
over 100 initial iterates,
for the simple method, augmented method, EIGIFP and BLEIGIFP.}
\label{tab:runtimes}
\end{table}

EIGIFP performed nominally better than the others for Matrix 2.
Algorithm \ref{alg:simp-aug} and BLEIGIFP were the fastest in the middle 3 cases, with 
BLEIGIFP running in respsectively $94\%$ and $77\%$ of the time of 
alg. \ref{alg:simp-aug} for {\tt Si5H12} and {Andrews}; and alg. \ref{alg:simp-aug} 
ran in $17\%$ the time of BLEIGIFP for {\tt c-65}. 
Algorithm \ref{alg:simple} ran somewhat faster than either for {\tt thermomech\_TC}.
From these results, alg. \ref{alg:simp-aug} and BLEIGIFP are roughly comparable for 
finding a single dominant eigenpair in these tests, with alg. \ref{alg:simple} and 
EIGEIF occasionally faster but potentially substantially slower on the problems
tested.

\subsection{Laplace-Neumann eigenvalues on the unit square}\label{subsec:LapNeu}

In this example, we seek a specific Neumann eigenvalue on the unit square $\Omega$, 
{\em i.e.}, eigenpairs of 
\begin{equation}
\label{eqn:neumann_ep}
-\Delta u_m =\mu_m u_m, ~  x \in \Omega, \qquad
\frac{\partial u_m}{\partial n}  = 0, ~  x \in \Gamma.
\end{equation}
The eigenvalues $\mu_m$ are of the form 
$(k^2+\ell^2)\pi^2, k,\ell=0,1,2,....$ The first several eigenvalues, 
scaled by $\pi^2$ for easier reading, are 
$0,1,1,2,2,4,4,5,5,8,9,9,\ldots$.

We discretize the domain $\Omega$ using a $P_2$ Lagrange finite element method within the {\tt FreeFem++} library \cite{hecht12}, using 20 nodes per side on the boundary. 
This leads to the discrete linear system $K{x} = \mu_h M {x}$, 
where $K$ and $M$ are the usual finite element stiffness and mass matrices, both of which are sparse and symmetric and of size $n = 1681$. 
With the relatively coarse mesh, we only expect the lower eigenmodes to be captured to 
high accuracy. We also note some eigenvalues occur with multiplicity 2.
For this problem, the stiffness matrix is singular, and the eigenvector corresponding to 
$\lambda = 0$ is a constant function on $\Omega$, which explains the initially very 
small residual when the iteration is started with an initial iterate of ones. 

We first implement our accelerated methods using
$A=M\backslash K$, with $\eta=10$, {\tt tol}=$10^{-10}$, a starting iterate of 
$u={\tt ones}(N,1)$, and a maximum of 400 iterations. 
We do not expect the largest discrete eigenvalue to be very close to the continuous one 
of a similar size $(5426\pi^2)$ since the discretization is coarse.
Both the augmented and simple methods converge to 
$5439.464585998008\pi^2$. 
The power method converges to a similar value, but with a much larger residual, 
suggesting greater inaccuracy in the eigenvector.
The results are shown in figure \ref{fig:neumannfemfull4dk} on the left.

\begin{figure}
  \centering
  \includegraphics[width=0.4\linewidth]{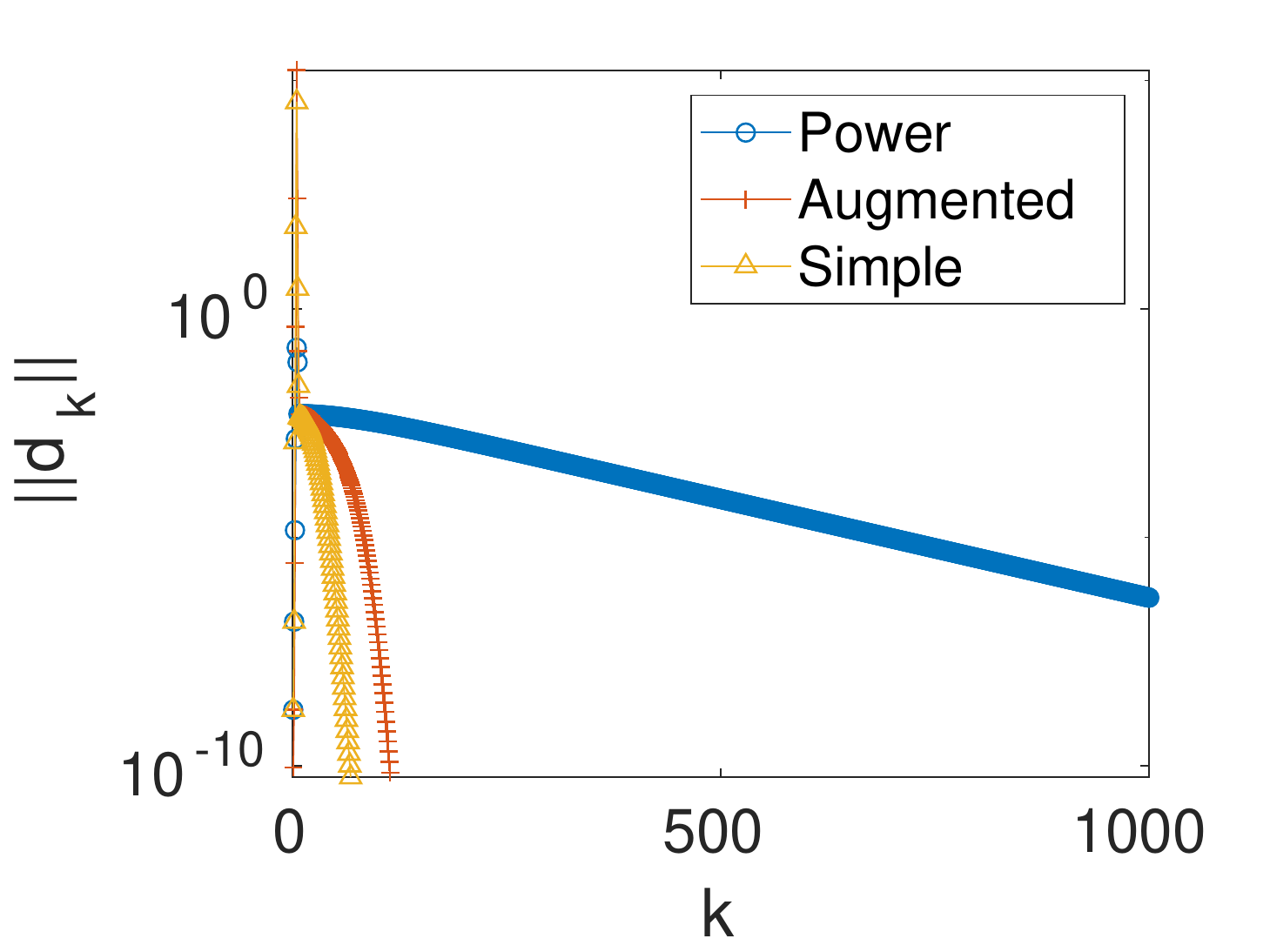}
  \includegraphics[width=0.4\linewidth]{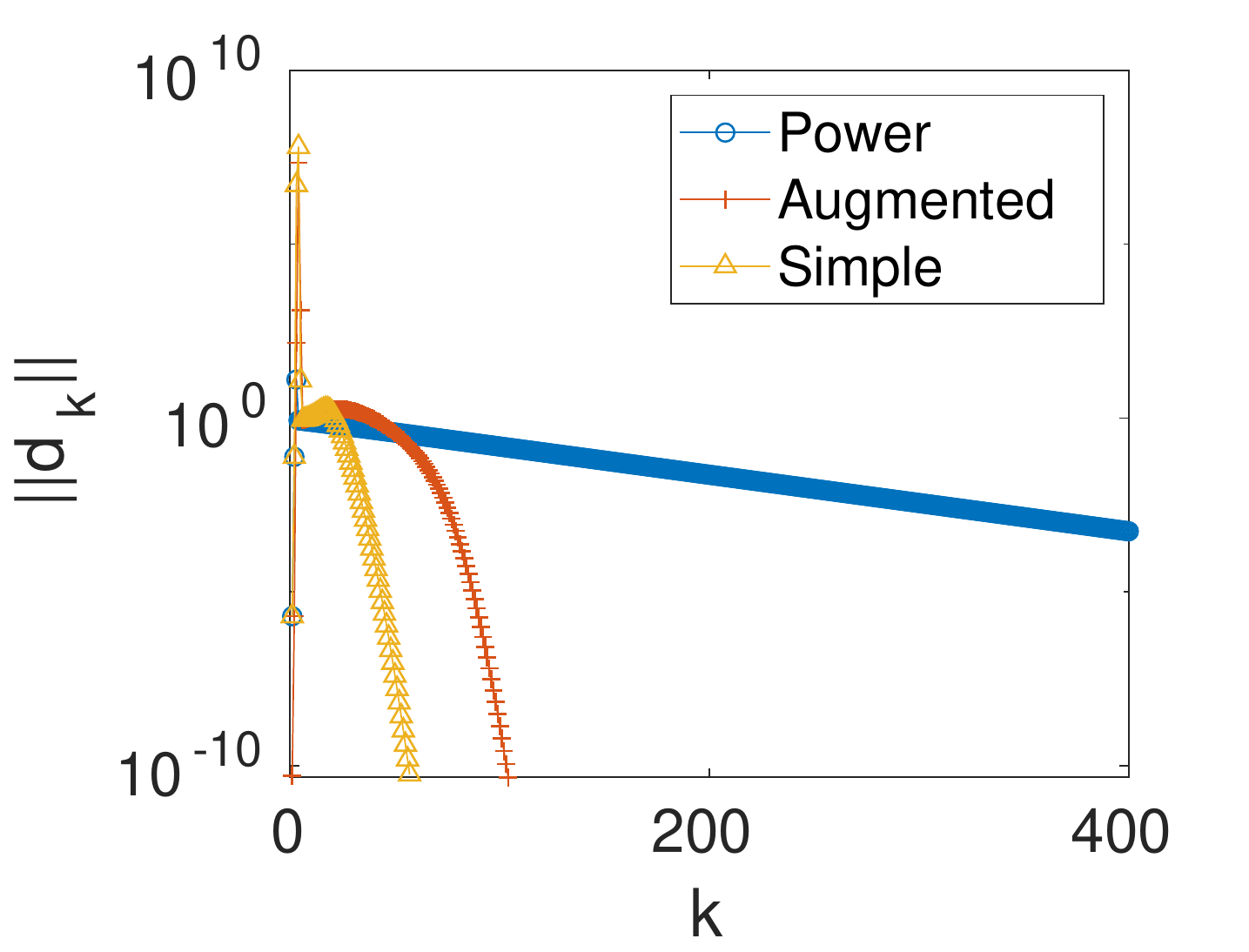}
  \caption{Residual histories for the 
  largest eigenvalue (left), and eigenvalue nearest to $20\pi^2$ (right),
  for problem \eqref{eqn:neumann_ep}.}
  \label{fig:neumannfemfull4dk}
\end{figure}

We next implement our accelerated method to locate the eigenvalue closest to the pair at 
$ 20\pi^2$. Using Matlab's {\tt eig} with an  shift of 
$20.01\pi^2$ and the default tolerance of $10^{-14}$, we obtain that the nearest 
discrete eigenvalue as $\mu_h=20.009477014838016\pi^2$. 
We run our method on 
$A=(K-\text{shift}\times M)\backslash M$, with 
$\eta=10$, {\tt tol}=$10^{-10}$, and a maximum of 400 iterations. 
The progression of the methods are shown in 
figure \ref{fig:neumannfemfull4dk} (right).
Both simple and augmented methods quickly recover from the initial data along 
the component of the zero eigenvalue and demonstrate exponential convergence.
The simple method converges to the same eigenvalue as the augmented method. The power iteration has a larger error, with the located eigenvalue
agreeing with the augmented and simple methods to $10^{-11}\pi^2$,
but with a much larger residual, suggesting an inaccurate approximation to the
eigenvector.

We recall here the algorithms presented in this paper assume the first few dominant eigenvalues are positive. When applying a shift, this condition 
is easily violated, particularly early in the spectrum. So, for instance, if we seek the 
eigenvalues of the Neumann problem closest to $4\pi^2$ by using the true eigenvalue as a 
shift, the methods do not work. 

Our final FEM example is for the Neumann eigenproblem when $\Omega$ consists of two 
squares of sides length 2, connected by a thin channel of thickness 0.01 and length 2, 
with 30 grid points per edge of the polygon, shown in figure \ref{fig:channel-mesh}. 
In this instance, the eigenfunctions are not guaranteed to be regular, and we expect some eigenvalues to have high multiplicity. 

We set the maximum number of iterations to 1300. Using $P_2$ Lagrange elements, 
we are lead to a $5461\times 5461$ sparse system 
${ K {x}= \mu_h M {x}}$. 
Matlab's {\tt eigs} yields the largest eigenvalue as $\approx 20081.60885\pi^2$. The simple and augmented methods yield the same eigenvalue, as does the power method; in this instance, the residual is least when using the augmented strategy.  The residual histories for
the three methods are shown in figure \ref{fig:channel},
along with a comparison between the residual and projection used
to compute the extrapolation parameter $\gamma$, in the augmented method.
 
\begin{figure}
  \centering
  \includegraphics[trim = 100pt 130pt 100pt 130pt,clip = true, width=0.7\textwidth]
   {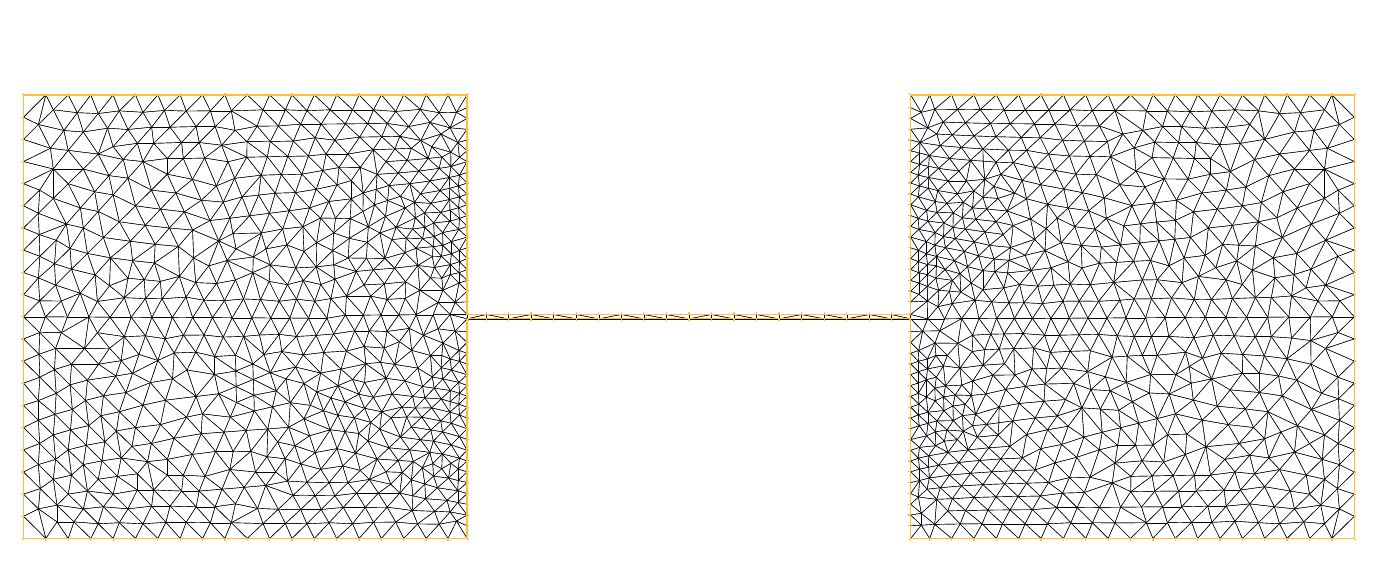}
  \caption{Domain $\Omega$ and its mesh for the channel problem.}
  \label{fig:channel-mesh}
\end{figure}
\begin{figure}		
  \centering
  \includegraphics[height=0.3\linewidth]{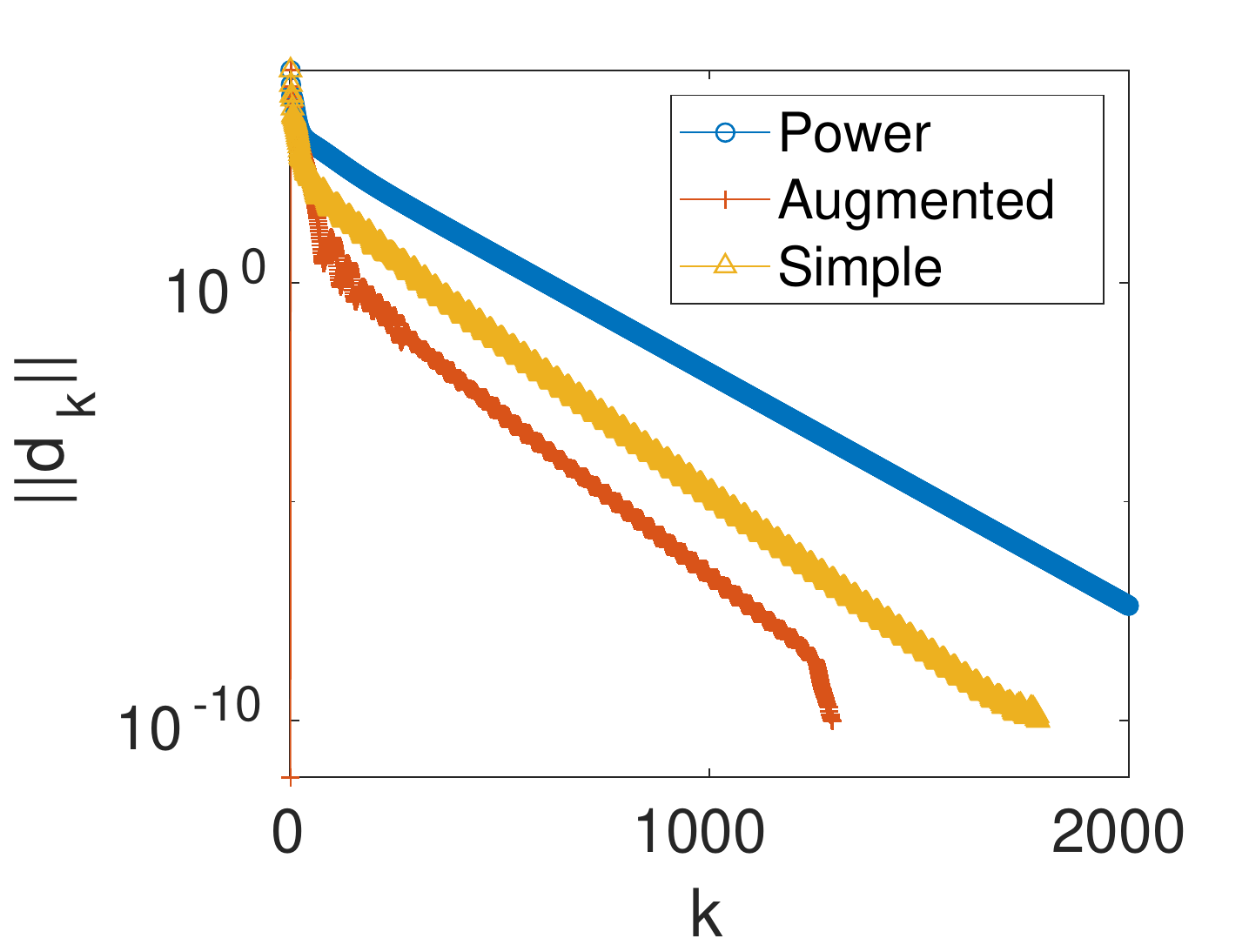}
  \includegraphics[height=0.3\linewidth]{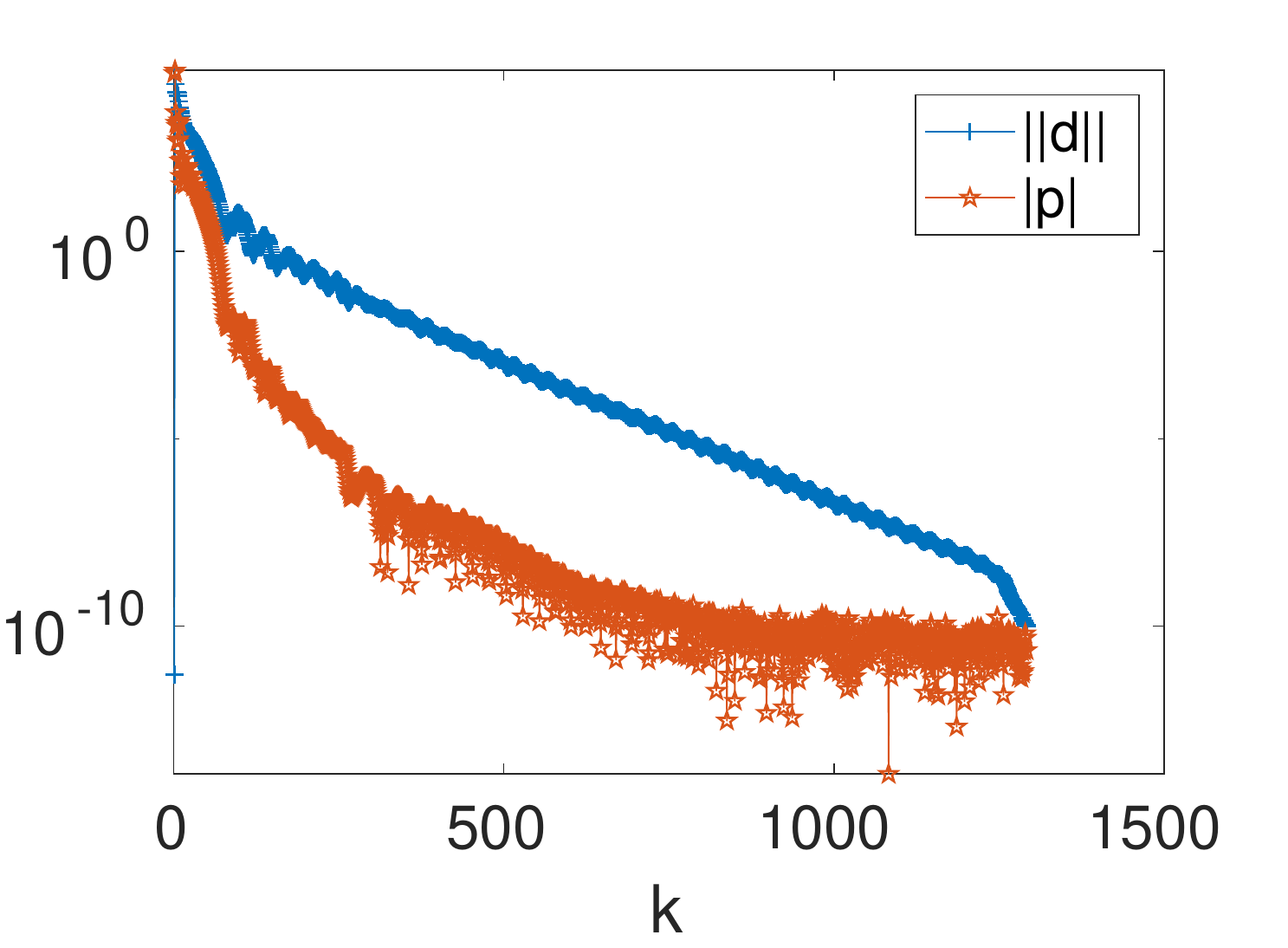}
  \caption{Residual histories for highest computed eigenvalue in 
   the channel example (left), and the residual $\nr{d_k}$ compared to the 
   projection $p_k$ used in the augmented method.}
  \label{fig:channel}
\end{figure}

\subsection{Steklov eigenvalues}
\begin{figure}
	\centering
	\includegraphics[height = 0.3\textwidth]{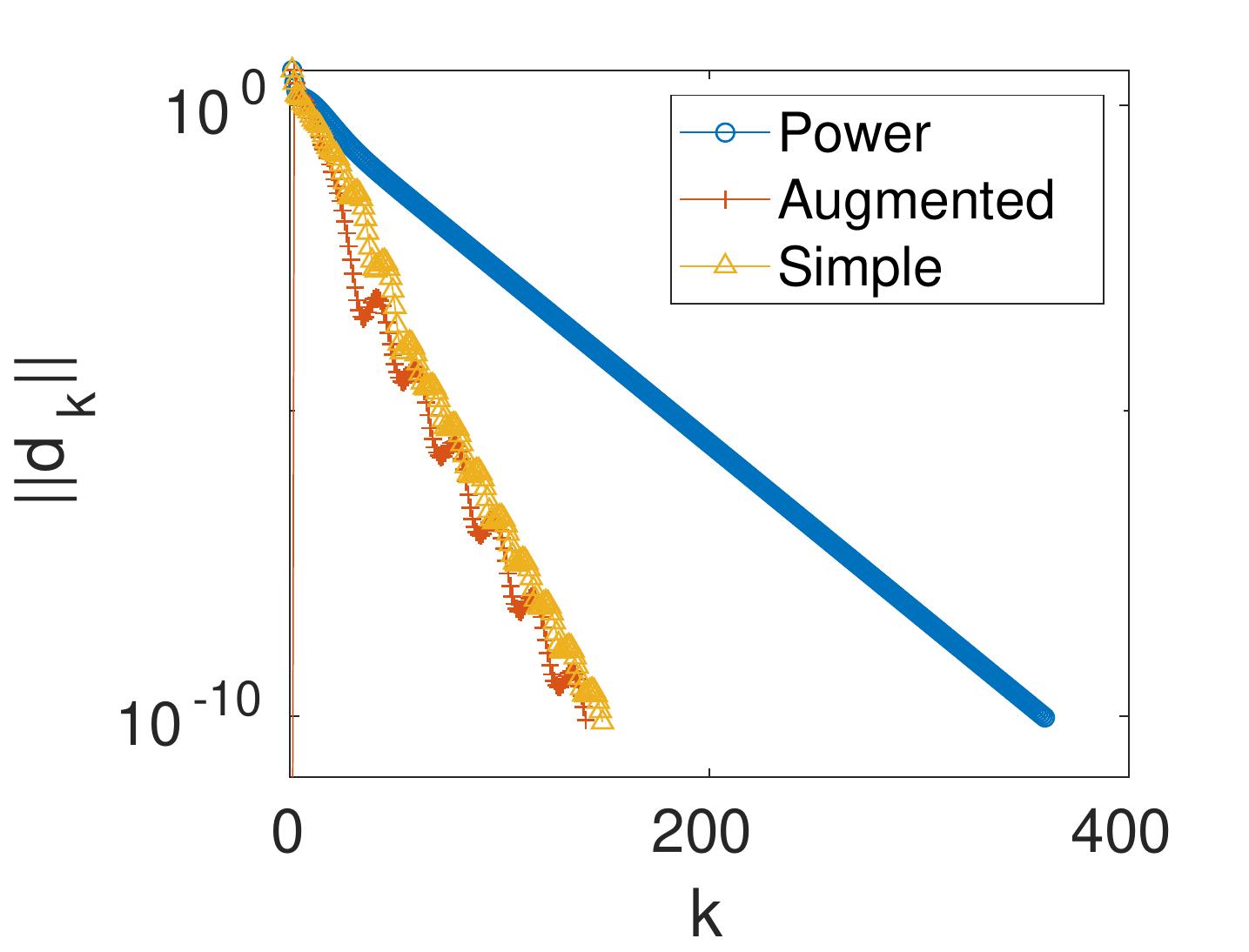}
	\includegraphics[height=0.3\textwidth]{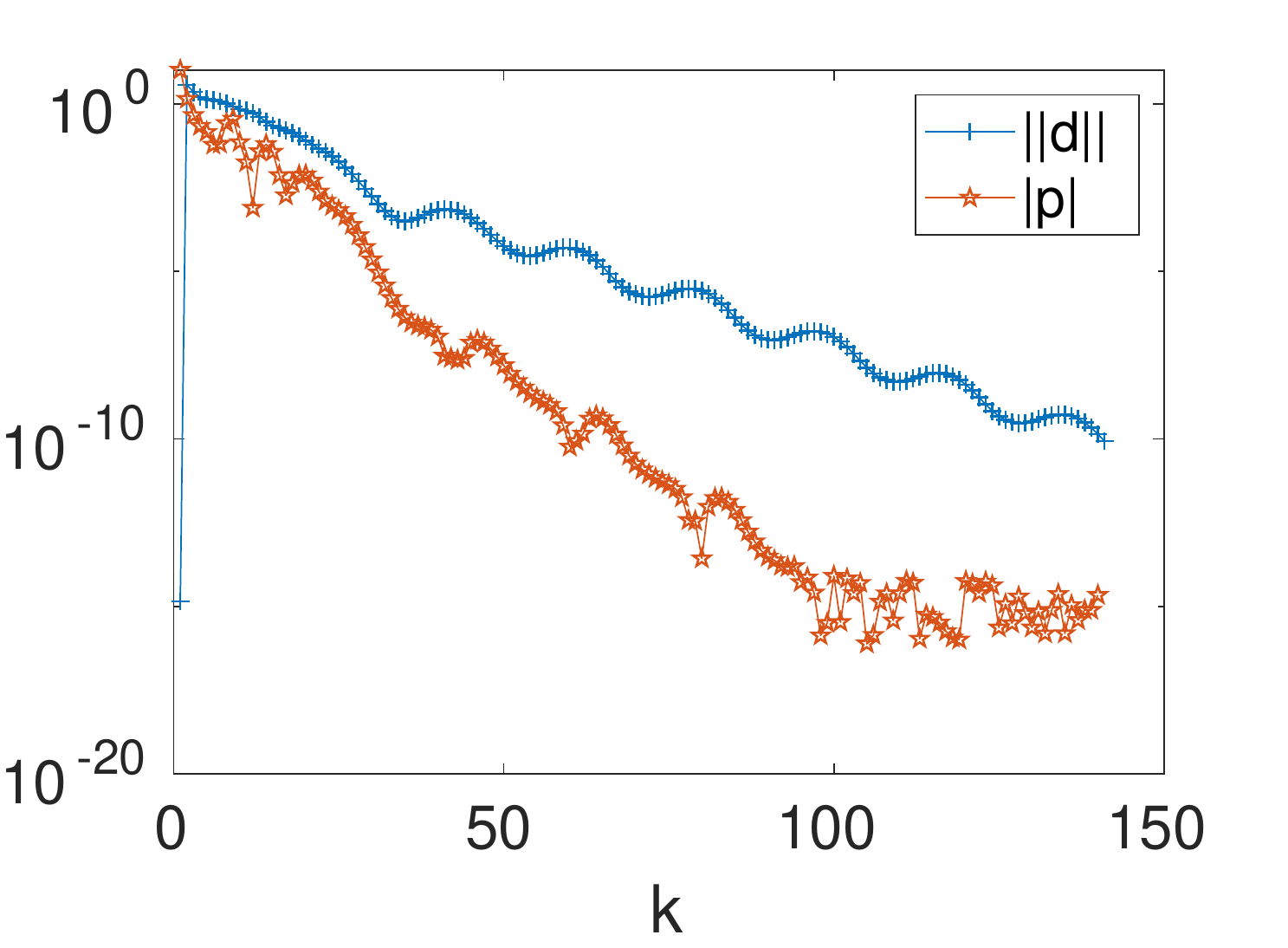}
	\caption{Residual histories (left) and the residual $\nr{d_k}$
 compared to the projection $p_k$ used in the augmented method, for the Steklov
 problem.}
  \label{fig:steklovdk}
\end{figure}
In this last example, we examine the behavior of our acceleration scheme on an 
eigenvalue problem involving the eigenvalues of a dense, and possibly ill-conditioned 
matrix.

Let  $\Omega\subset \mathbb{R}^2$ be  a bounded domain with a Lipschitz piecewise-smooth
boundary $\Gamma$. The general Steklov  eigenvalue problem can be stated as: 
find $u_m\in H^1(\Omega), \lambda_m \in \mathbb{R} $ so that
\begin{equation}
\label{steklov_ep}
\Delta u_m = 0 ~  x, \in \Omega, \qquad
\frac{\partial u_m}{\partial n}  = \lambda_m  u_m, ~  x \in \Gamma,
\end{equation}
for eigenvalues $\lambda_m$ and eigenfunction $u_m$. The system \eqref{steklov_ep} is called the {\it Steklov} problem, and $\lambda_m$ and $u_m$ are called Steklov eigenvalues and eigenfunctions. We note this spectrum coincides with that of the Dirichlet-to-Neumann operator
$\Lambda\colon H^{\frac{1}{2}}(\Gamma)\to H^{-\frac{1}{2}}(\Gamma)$,
given by $\Lambda u=\partial_{n}(\mathcal{H}u)$, where $\mathcal{H} u$
denotes the unique harmonic extension of $u\in
H^{\frac{1}{2}}(\Gamma)$ to $\Omega$. 

A particularly elegant and accurate (in the sense of approximation) strategy for computing $\lambda_m$ is {\it via} boundary integral strategies. We shall employ a {\it modified} single layer strategy.
A modified formulation is based on the ansatz
\begin{equation}
\label{steklov_single_layer_modified}
u(x) = \int_{\Gamma} \Phi (x-y)(\varphi(y) - \overline{\varphi} ds(y) + \overline{\varphi}  \quad x\in \Omega,
\end{equation}
based on the average of the density
$\overline{\varphi} = |\Gamma|^{-1}\int_{\Gamma} \varphi(y)  ds(y)$,
as suggested in~\cite[Equation 7.58]{kress1999}. Here $\Phi$ denotes the fundamental solution for the Laplacian. We  introduce the operators~ $\mathcal{S}:H^{-1/2}(\Gamma)\rightarrow H^{1/2}(\Gamma)$ and $\mathcal{T}:H^{-1/2}(\Gamma)\rightarrow H^{-1/2}(\Gamma)$ as 
\begin{align*} 
\mathcal{S}[\phi]\coloneqq \int_{\Gamma}  \Phi(x-y)\phi(y) ds(y), 
\quad
\mathcal{T}[\phi]\coloneqq \int_{\Gamma}  \frac{\partial\Phi(x-y)}{\partial n(x)}\phi(y)ds(y), \quad x\in \Gamma.
\end{align*}
Taking into
account well known expressions (see e.g.~\cite{kress1999}) for the
jump of the single layer potential and its normal derivative across
$\Gamma$, the eigenvalue problem \eqref{steklov_single_layer_modified} is
reduced to a system of integral equations 
\begin{equation}
\label{steklov_integral_equation_system}
(\mathcal{T}+\frac{1}{2}\mathcal{I})\left[ \varphi - \overline{\varphi}\right]=
\lambda\left(\mathcal{S}[\varphi-\overline{\varphi}]+ \overline{\varphi} \right),x\in\Gamma,
\end{equation}
for the eigenvalue $\lambda$ and density $\phi$. This system is discretized using a Fourier spectral strategy. The resultant discrete  generalized eigenvalue problem is to find $\lambda \in\mathbb{R}, c \in \mathbb{R}^N$ such that
${A c} = \lambda {Bc}$,
where ${B}$ is invertible (thanks to the modification of the single layer). Both matrices are dense.

We take $\Omega$ to be the unit disk, and use $N=32$ collocation points on $\Gamma$. The true eigenvalues are known to consist of the countable set $0,1,1,2,2,3,3....$; with the discrete strategy used, we expect the largest eigenvalue we approximate to be close to $16$.
All three tested methods locate the eigenvalue correct to 
14 digits. (We have used a tolerance of $10^{-10}$). 
The rapid decay of the residual is evident in the accelerated methods 
(see figure \ref{fig:steklovdk}, left).  
Once again, since $\lambda=0$ is an eigenvalue (corresponding to the constant 
eigendensity case), and our starting iterate of a vector of ones leads to an initial 
near-zero residual. 
The somewhat oscillatory convergence behavior is due to interference from 
eigencomponents other than the first two most dominant ones.  
This suggests that introducing an extrapolation
technique with greater depth that is designed to reduce multiple components at each 
step (instead of just the second-most dominant) may lead to smoother convergence.
Figure \ref{fig:steklovdk} on the right compares the residual to the projection used
in the augmented method.  Similarly to the previous example (see figure \ref{fig:channel},
right), here the projection $p_k$ of the update along the approximate eigenvector, 
converges at a better rate than the full residual; in contrast, however, it 
achieves a tolerance of $10^{-15}$, when the residual is reduced to $10^{-10}$.

\section{Conclusion}\label{sec:conc}
In this paper we introduced and analyzed alg. \ref{alg:simple}, a simple method
to accelerate the power iteration.  We proved the method features exponential
convergence if the initial iterate has components of only two eigenvectors.  
We further introduced alg. \ref{alg:simp-aug}, which
modifies the simple method to help stabilize the early stages of the iteration.
Both methods are one-step extrapolation techniques which form the accelerated
iterates from a linear combination of the two most recent update steps.
The extrapolation parameter is computed by a ratio of residuals, and
requires minimal additional computation beyond a residual norm and Rayleigh 
quotient at each iteration.  The methods are shown numerically to be robust with
respect to initial iterate, and to substantially improve performance in the case
where the spectral gap is small.


\bibliographystyle{siamplain}
\bibliography{eigrefs}

\begin{thebibliography}{10}

\bibitem{BaOs91}
{\sc I.~Babu\v{s}ka and J.~Osborn}, {\em Eigenvalue problems}, in Handbook of
  Numerical Analysis. II: Finite Element Methods (Part 1), P.~G. Ciarlet and
  J.~L. Lions, eds., North-Holland, Amsterdam, 1991, pp.~641--787.

\bibitem{BWM20}
{\sc Z.-Z. Bai, W.-T. Wu, and G.~V. Muratova}, {\em The power method and
  beyond}, Applied Numerical Mathematics,  (2020),
  \url{https://doi.org/10.1016/j.apnum.2020.03.021}.

\bibitem{kress1999}
{\sc D.~Colton and R.~Kress}, {\em Inverse acoustic and electromagnetic
  scattering theory}, vol.~93 of Applied Mathematical Sciences,
  Springer-Verlag, Berlin, second~ed., 1998.

\bibitem{DH11}
{\sc T.~A. Davis and Y.~Hu}, {\em The {U}niversity of {F}lorida sparse matrix
  collection}, ACM Transactions on Mathematical Software, 38 (2011), pp.~1--25,
  \url{https://doi.org/10.1145/2049662.2049663}, \url{https://sparse.tamu.edu}.

\bibitem{DSYG18}
{\sc J.~A. Duersch, M.~Shao, C.~Yang, and M.~Gu}, {\em A robust and efficient
  implementation of lobpcg}, SIAM Journal on Scientific Computing, 40 (2018),
  pp.~C655--C676, \url{https://doi.org/10.1137/17M1129830}.

\bibitem{GoGr06}
{\sc G.~H. Golub and C.~Greif}, {\em An {A}rnoldi-type algorithm for computing
  page rank}, BIT Numerical Mathematics, 46 (2006), pp.~759--771,
  \url{https://doi.org/10.1007/s10543-006-0091-y}.

\bibitem{GoVa13}
{\sc G.~H. Golub and C.~F. Van~Loan}, {\em Matrix Computations}, The Johns
  Hopkins University Press, Baltimore, 4th~ed., 2013.

\bibitem{GoYe02}
{\sc G.~H. Golub and Q.~Ye}, {\em An inverse free preconditioned {K}rylov
  subspace method for symmetric generalized eigenvalue problems}, SIAM Journal
  on Scientific Computing, 24 (2002), pp.~312--334,
  \url{https://doi.org/10.1137/S1064827500382579}.

\bibitem{HKMG03}
{\sc T.~H. Haveliwala, S.~D. Kamvar, D.~Klein, C.~D. Manning, and G.~H. Golub},
  {\em Computing {P}age{R}ank using power extrapolation}, 2003,
  \url{http://www-sccm.stanford.edu/nf-publications-tech.html}.
\newblock Technical report SCCM03-02, Stanford University, Stanford, CA.

\bibitem{hecht12}
{\sc F.~Hecht}, {\em New development in freefem++}, J. Numer. Math., 20 (2012),
  pp.~251--265, \url{https://freefem.org/}.

\bibitem{HWHSG21}
{\sc Q.-Y. Hu, C.~Wen, T.-Z. Huang, Z.-L. Shen, and X.-M. Gu}, {\em A variant
  of the {P}ower-{A}rnoldi algorithm for computing {P}age{R}ank}, Journal of
  Computational and Applied Mathematics, 381 (2021), p.~113034,
  \url{https://doi.org/10.1016/j.cam.2020.113034}.

\bibitem{ipsen97}
{\sc I.~C.~F. Ipsen}, {\em Computing an eigenvector with inverse iteration},
  SIAM Review, 39 (1997), pp.~254--291,
  \url{http://www.jstor.org/stable/2133109}.

\bibitem{kamvar10}
{\sc S.~Kamvar}, {\em Numerical Algorithms for Personalized Search in
  Self-organizing Information Networks}, Princeton University Press, Princeton,
  2010.

\bibitem{KHMG03}
{\sc S.~D. Kamvar, T.~H. Haveliwala, C.~D. Manning, and G.~H. Golub}, {\em
  Extrapolation methods for accelerating pagerank computations}, in Proceedings
  of the Twelfth International World Wide Web Conference, 2003.

\bibitem{knyazev01}
{\sc A.~V. Knyazev}, {\em Toward the optimal preconditioned eigensolver:
  Locally optimal block preconditioned conjugate gradient method}, SIAM Journal
  on Scientific Computing, 23 (2001), pp.~517--541,
  \url{https://doi.org/10.1137/S1064827500366124}.

\bibitem{polyak64}
{\sc B.~T. Polyak}, {\em Some methods of speeding up the convergence of
  iteration methods}, USSR Computational Mathematics and Mathematical Physics,
  45 (1964), p.~1–17.

\bibitem{QuYe10}
{\sc P.~Quillen and Q.~Ye}, {\em A block inverse-free preconditioned {K}rylov
  subspace method for symmetric generalized eigenvalue problems}, Journal of
  Computational and Applied Mathematics, 233 (2010), pp.~1298--1313,
  \url{https://doi.org/10.1016/j.cam.2008.10.071}.
\newblock Special Issue Dedicated to William B. Gragg on the Occasion of His
  70th Birthday.

\bibitem{DSHMRX19}
{\sc C.~D. Sa, B.~He, I.~Mitliagkas, C.~R\'e, and P.~Xu}, {\em Accelerated
  stochastic power iteration}, Proc. Mach. Learn. Res., 84 (2019), pp.~58--67.

\bibitem{wilkinson65}
{\sc J.~H. Wilkinson}, {\em The Algebraic Eigenvalue Problem}, Clarendon Press,
  Oxford, 1965.

\end{thebibliography}
\vfill
\end{document}